\icmltitlerunning{On Acceleration with Noise-Corrupted Gradients}
\newcommand{\innp}[1]{\left\langle #1 \right\rangle}
\newcommand{\mA}{\mathbf{A}}
\newcommand{\zeros}{\textbf{0}}
\newcommand{\vx}{\mathbf{x}}
\newcommand{\vxh}{\mathbf{\hat{x}}}
\newcommand{\vy}{\mathbf{y}}
\newcommand{\vz}{\mathbf{z}}
\newcommand{\vv}{\mathbf{v}}
\newcommand{\vw}{\mathbf{w}}
\newcommand{\vvh}{\mathbf{\hat{v}}}
\newcommand{\vb}{\mathbf{b}}
\newcommand{\vu}{\mathbf{u}}
\newcommand{\veta}{\bm{\eta}}
\newcommand{\vxi}{\bm{\xi}}
\newcommand{\defeq}{\stackrel{\mathrm{\scriptscriptstyle def}}{=}}
\newcommand{\tnabla}{\widetilde{\nabla}}
\newcommand{\littlesum}{\mathop{\textstyle\sum}}
\def\mathcolor#1#{\@mathcolor{#1}}
\def\@mathcolor#1#2#3{%
  \protect\leavevmode
  \begingroup
    \color#1{#2}#3%
  \endgroup
}
\newcommand*{\vsepfbox}[1]{%
  \begingroup
    \sbox0{\fbox{#1}}%
    \setlength{\fboxrule}{0pt}%
    \mbox{\kern-\fboxsep\fbox{\unhbox0}\kern-\fboxsep}%
  \endgroup
}
\theoremstyle{plain} \numberwithin{equation}{section}
\newtheorem{theorem}{Theorem}[section]
\numberwithin{theorem}{section}
\newtheorem{corollary}[theorem]{Corollary}
\newtheorem{lemma}[theorem]{Lemma}
\newtheorem{proposition}[theorem]{Proposition}
\newtheorem{fact}[theorem]{Fact}
\theoremstyle{definition}
\newtheorem{definition}[theorem]{Definition}
\newtheorem{remark}[theorem]{Remark}
\newcommand{\axgd}{\textsc{axgd}}
\newcommand{\agdp}{\textsc{agd}$+$}
\newcommand{\magdp}{$\mu$\textsc{agd}$+$}
\newcommand{\agd}{\textsc{agd}}
\newcommand{\gd}{\textsc{gd}}
\newcommand{\acsa}{\textsc{ac-sa}}
\newcommand{\rsd}{\textsc{Restart+SlowDown}}
\DeclareMathOperator*{\argmin}{argmin}
\begin{document}
\onecolumn
\icmltitle{On Acceleration with Noise-Corrupted Gradients}



\icmlsetsymbol{equal}{*}

\begin{icmlauthorlist}
\icmlauthor{Michael B. Cohen}{mit}
\icmlauthor{Jelena Diakonikolas}{bu}
\icmlauthor{Lorenzo Orecchia}{bu}
\end{icmlauthorlist}

\icmlaffiliation{mit}{Department of EECS, Massachusetts Institute of Technology, Cambridge, MA, USA}
\icmlaffiliation{bu}{Department of Computer Science, Boston University, Boston, MA, USA}

\icmlcorrespondingauthor{Jelena Diakonikolas}{jelenad@bu.edu}
\icmlcorrespondingauthor{Lorenzo Orecchia}{orecchia@bu.edu}

\icmlkeywords{First-order methods, acceleration, noise, gradients}

\vskip 0.3in



\printAffiliationsAndNotice{} 

\begin{abstract}
Accelerated algorithms have broad applications in large-scale optimization, due to their generality and fast convergence. 
However, their stability in the practical setting of noise-corrupted gradient oracles is not well-understood. 
This paper provides two main technical contributions: (i) a new accelerated method \agdp~that generalizes Nesterov's \agd~and improves on the recent method \axgd~\cite{AXGD}, and (ii) a theoretical study of accelerated algorithms under noisy and inexact gradient oracles, which is supported by numerical experiments.  This study leverages the simplicity of \agdp~and its analysis
to clarify the interaction between noise and acceleration and to suggest modifications to the algorithm that reduce the mean and variance of the error incurred due to the gradient noise.
\end{abstract}

\section{Introduction}\label{sec:intro}

First-order methods for convex optimization play a fundamental role in the solution of modern large-scale computational problems, encompassing applications in machine learning~\cite{Bube2014}, scientific computing~\cite{ST04,KOSZ13} and combinatorial optimization~\cite{Sherman2017, Ene}.
%
%
A central object of study in this area is the notion of {\it acceleration} -- an algorithmic technique that can be deployed when minimizing a {\it smooth} convex function $f(\cdot)$ via queries to a first-order oracle (a {\it blackbox}~that on input $\vx \in \mathcal{X}$, returns the vector $\nabla f(\vx)$ in constant time). In this setting, a function $f(\cdot)$ is $L$-smooth if it is differentiable and its gradient is $L$-Lipschitz continuous w.r.t to a pair of dual norms $\|\cdot \|,\, \|\cdot\|_*$, i.e.:
\begin{equation}\label{eq:smoothness}
\forall \; \vx,\vy \in \mathcal{X}, \; \|\nabla f(\vx) - \nabla f(\vy) \|_* \leq L \cdot \|\vx-\vy\|.
\end{equation}
Notably, the idea of acceleration can be generalized beyond this notion of smoothness to various weakly smooth problems. Examples include problems in which $f(\cdot)$ has H\"older-continuous gradients, and even the problems with  certain structured non-smooth objectives~\cite{nesterov2005smooth, Allen-Zhu2015,lu2016relatively}. In this paper, we restrict our attention to the original smooth setting, to which all others can be traced back.

Acceleration is interesting because it yields faster algorithms than classical steepest-descent algorithms, often matching or closely approximating known information-theoretic lower bounds on the number of necessary queries to the oracle. 
In the simplest smooth setting, the optimal accelerated algorithm, Accelerated Gradient Descent~\cite{nesterov1983},  achieves an error that scales as $O(1/k^2),$ where $k$ is the number of oracle queries. This should be compared to the convergence of steepest-descent methods, which attempt to locally minimize the first-order approximation to the function and only yield $O(1/k)$-convergence~\cite{ben2001lectures,nesterov2013introductory}.
Many of the workhorses of optimization, such as conjugate gradient and FISTA~\cite{beck2009fast}, are instantiations of accelerated algorithms.

Because of its generality, acceleration still proves an active topic of research. 
In particular, two weaknesses in the classical presentation of accelerated methods have recently attracted attention of scholars and practitioners alike: 1) the complexity and lack of underlying intuition in the convergence analysis of accelerated methods, and 2) the apparent lack of robustness to perturbations of the gradient oracle displayed by accelerated methods when compared to their non-accelerated counterparts.

Recently, some of the mystery of acceleration has faded, as different works have provided natural interpretations and alternative proofs for accelerated methods~\cite{AllenOrecchia2017,krichene2015accelerated,wibisono2016variational,Bubeck2015,lessard2016analysis,hu2017control,thegaptechnique}. Of particular interest to our work is the framework of~\cite{thegaptechnique}, which completely derives accelerated algorithms from the Euler discretization of a continuous dynamics that  minimizes a natural notion of duality gap. 

In terms of robustness, it has long been observed empirically that a na\"ive application of accelerated algorithms to inexact oracles often leads to error accumulation, even in the setting of random perturbations, while standard steepest descent algorithms do not suffer from this problem~\cite{Hardt-blog}. From a theoretical point of view, a number of papers have introduced oracle models that account for inexact gradient information. For example,~\cite{d2008smooth} proposed a restricted model of perturbations to the gradient that preserves the possibility of acceleration.  More recently,~\cite{devolder2014first} proposed a more general framework that allows for larger perturbations and seems to capture the error accumulation and instability observed in practice for accelerated methods. In  these works, the inexact oracle outputs an arbitrary deterministic perturbation of the true gradient oracle. 
In particular,~\cite{devolder2014first}~shows that such perturbations can be adversarially chosen to encode non-smooth problems. 

For stochastic perturbations, \cite{lan2012optimal,ghadimi2012optimal,ghadimi2013optimal} considered an additive-noise model, under which  \cite{lan2012optimal,ghadimi2012optimal} obtained an optimal convergence bound for the accelerated algorithm \acsa~in the smooth, non-strongly convex setting, but sub-optimal for the smooth, strongly-convex case.\footnote{In particular, the deterministic term in the convergence bound in~\cite{ghadimi2012optimal} decreases as $O(1/k^2)$ instead of the optimal $O(1-1/\sqrt{\kappa})^k$ convergence, where $\kappa$ is the objective function's condition number.} This bound was further improved to the optimal one in~\cite{ghadimi2013optimal} for the setting of \emph{constrained} smooth and strongly convex minimization, by coupling \acsa~algorithm from~\cite{ghadimi2012optimal} with a domain-shrinking procedure. More recently, \cite{Jain2017} completely closed this gap for the case of linear regression. Additionally,~\cite{dvurechensky2016stochastic} unified the deterministic model~\cite{devolder2014first}, the stochastic model~\cite{ghadimi2012optimal}, and the associated results. These references are the most closely related to our work.





\paragraph{Our contributions}
We study the issue of robustness of accelerated methods in three steps. First, we propose a novel, simple, generic accelerated algorithm \agdp~following the framework of~\cite{thegaptechnique}. 
This algorithm has a simple interpretation and analysis, and generalizes other known accelerated algorithms. 

Second, we leverage the simplicity of the analysis of \agdp~to characterize its behavior on different models of inexact oracles. Our analysis recovers the results for the deterministic oracle models of~\cite{d2008smooth} and~\cite{devolder2014first}. More generally, we consider the more general model of noise-corrupted gradient oracle, in which the true gradient $\nabla f(\vx)$ is corrupted by additive noise $\veta$:
\begin{equation}\label{eq:noisy}
\tnabla f(\vx) = \nabla f(\vx) + \veta,
 \end{equation}
where the perturbation $\veta$ may be a random variable. Such a model captures the setting of stochastic methods, in which the gradient is only estimated from a subset of its components~\cite{lan2012optimal,ghadimi2012optimal,ghadimi2013optimal,atchade2014stochastic,krichene-17acc-avg,Jain2017}, the setting of differentially private empirical risk minimization, in which Gaussian noise is intentionally added to the gradient to protect the privacy of the data~\cite{smith}, and the setting of engineering systems in which the gradient is estimated from noisy measurements~\cite{birand2013measurements}.

Our algorithm \agdp~is closely related to \acsa~from \cite{lan2012optimal} and can in fact be seen as a ``lazy'' (dual averaging) counterpart of \acsa.
After this paper had been submitted, Gasnikov and Nesterov independently proposed a universal method for stochastic composite optimization~\cite{Gasnikov2018}. While their algorithm is defined recursively and does not explicitly account for the iterative construction of a dual solution, a simple unwinding of the recursion shows that it is identical to \agdp.
However, the fact that \agdp~is obtained and analyzed through the use of the approximate duality gap technique~\cite{thegaptechnique} allows us to streamline the analysis and obtain various bounds for both deterministic and stochastic models of noise. Further, in the setting of smooth and strongly convex minimization, our analysis leads to a  tighter convergence bound for a single-stage algorithm (without domain-shrinking) than previously obtained in~\cite{ghadimi2012optimal,ghadimi2013optimal}~(see Section~\ref{app:ssc-magdp} for a precise statement).

{There are other models of noise that are not considered here. For example, we do not consider the model that includes both multiplicative and additive error in the gradient oracle~\cite{hu2017analysis}. Further, stochastic methods with variance reduction (see, e.g.,~\cite{schmidt2017minimizing,allen2017katyusha} and references therein) lead to a particular structure of the gradient noise variance (e.g., Lemma~3.4 in~\cite{allen2017katyusha}) that is not explored in this work. Nevertheless, we believe that our analysis  is general enough to be extended to these settings as well, which is deferred to the future version of this paper.}


Our results reveal an interesting discrepancy between noise tolerance in the settings of constrained and unconstrained smooth minimization. Namely, in the setting of constrained optimization, the error due to noise does not accumulate and is proportional to the diameter of the feasible region and the expected norm of the noise. In the setting of unconstrained optimization, the bound on the error incurred due to the noise accumulates, as observed empirically by~\cite{Hardt-blog}. However, our analysis also suggests a simple restart and slow down semi-heuristic for stabilizing the noise-incurred error, which allows taking advantage of both the acceleration and the noise stability under stochastic noise.

In the case of smooth and strongly convex minimization (Section~\ref{app:ssc-magdp}), the error due to noise does not accumulate even if the region is unconstrained, as long as the noise is zero-mean, independent, and has bounded variance.\footnote{Obtaining similar bounds for a slightly more general model that relaxes independence (similar to~\cite{lan2012optimal,ghadimi2012optimal}) is also possible; see Section~\ref{app:gen-stochastic-models}.} Further, using smaller step sizes than in the standard accelerated version of the method, the error due to noise decreases at rate $1/k$ (compare this to the $1/\sqrt{k}$ rate for smooth non-strongly convex functions). This means that strong convexity of a function implies higher robustness to noise. 


Finally, we verify the predictions and insights from our analysis of \agdp~by performing numerical experiments comparing \agdp~to other accelerated and non-accelerated methods on noise-corrupted gradient oracles. 
A noteworthy outcome of these experiments is the following:  when a natural generic  restart \& slow-down semi-heuristic is applied, the accelerated algorithm \axgd~\cite{thegaptechnique} and the algorithm \agdp~presented in this paper seem to outperform Nesterov's \agd~both in expectation and in variance in the presence of large noise. Further, we note that compared to \axgd, \agdp~reduces the oracle complexity (the number of queried gradients) by a factor of two.

%





\section{Notation and Preliminaries}\label{sec:prelims}

We assume that we are given a continuously differentiable convex function $f:\mathcal{X}\rightarrow \mathbb{R}$, where $\mathcal{X} \subseteq \mathbb{R}^n$ is a closed convex set. Hence:
\begin{equation}\label{eq:def-cvx}
\forall \vy, \vx \in \mathcal{X}: \; f(\vy) \geq f(\vx) + \innp{\nabla f(\vx),\, \vy - \vx},
\end{equation}
where $\nabla f(\cdot)$ denotes the gradient of $f(\cdot)$. 

Given oracle access to (possibly noise-corrupted) gradients of $f(\cdot)$, we are interested in minimizing $f(\cdot)$. We denote by $\vx_* \in \arg\min_{\vx \in \mathcal{X}} f(\vx)$ any (fixed) minimizer of $f(\cdot)$. 


We assume that there is an arbitrary (but fixed) norm \mbox{$\|\cdot\|$} associated with the space, and all the statements about function properties are stated with respect to that norm. We also define the dual norm $\|\cdot\|_*$ in the standard way: $\|\vz\|_* = \sup\{{\innp{\vz, \vx}}: \|\vx\|= 1\}$. The following definitions will be useful in our analysis, and thus we state them here for completeness.  

\begin{definition}\label{def:smoothness}
A function $f:\mathcal{X}\rightarrow \mathbb{R}$ is $L$-smooth on $\mathcal{X}$ with respect to a norm $\|\cdot\|$, if for all $\vx, \vxh \in \mathcal{X}$: $f(\vxh) \leq f(\vx) + \innp{\nabla f(\vx), \vxh - \vx} + \frac{L}{2}\|\vxh - \vx\|^2$.  This is equivalent to Equation~\eqref{eq:smoothness}.
\end{definition}
A gradient step is defined in a standard way as $\mathrm{Grad}(\vx) = \arg\min_{\vxh \in \mathcal{X}} \{f(\vx) + \innp{\nabla f(\vx), \vxh - \vx} + \frac{L}{2}\|\vxh-\vx\|^2\}$.
\begin{definition}\label{def:strong-convexity}
A function $f:\mathcal{X}\rightarrow \mathbb{R}$ is $\mu$-strongly convex on $\mathcal{X}$ with respect to a norm $\|\cdot\|$, if for all $\vx, \vxh \in \mathcal{X}$: $f(\vxh) \geq f(\vx) + \innp{\nabla f(\vx), \vxh - \vx} + \frac{\mu}{2}\|\vxh - \vx\|^2$.
\end{definition}

\begin{definition}\label{def:cxv-conj}(Convex Conjugate) Function $\psi^*$ is the convex conjugate of $\psi: \mathcal{X} \rightarrow \mathbb{R}$, if $\psi^* (\vz) = \max_{\vx \in \mathcal{X}}\{\innp{\vz, \vx} - \psi(\vx)\}$, $\forall \vz \in \mathbb{R}$. 
\end{definition}
We assume that there is a strongly-convex differentiable function $\psi:\mathcal{X}\rightarrow \mathbb{R}$ such that $\max_{\vx \in \mathcal{X}} \{\innp{\vz, \vx} - \psi (\vx)\}$ is easily solvable, possibly in a closed form. Notice that this problem defines the convex conjugate of $\psi(\cdot)$, i.e., $\psi^* (\vz) = \max_{\vx \in \mathcal{X}} \{\innp{\vz, \vx} - \psi (\vx)\}$. 
The following fact is a simple corollary of Danskin's Theorem\footnote{See, for instance, Proposition 4.15 in the textbook~\cite{Bertsekas2003}.}.
\begin{fact}\label{fact:danskin}
Let $\psi: \mathcal{X} \to \mathbb{R}$ be a differentiable strongly-convex function. Then:
$$
\nabla \psi^*(\vz) = \arg\max_{\vx \in \mathcal{X}} \left\{ \innp{\vz, \vx} - \psi(\vx)\right\}.
$$ 
\end{fact}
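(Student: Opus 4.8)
The plan is to obtain the identity directly from Danskin's Theorem, as the statement already advertises. Write $\psi^*(\vz) = \max_{\vx \in \mathcal{X}} g(\vz, \vx)$ with $g(\vz, \vx) \defeq \innp{\vz, \vx} - \psi(\vx)$, and note that for each fixed $\vx$ the partial map $\vz \mapsto g(\vz, \vx)$ is affine with $\nabla_{\vz} g(\vz, \vx) = \vx$. Danskin's Theorem (Proposition 4.15 in~\cite{Bertsekas2003}) asserts that whenever the maximum over $\vx$ is attained at a \emph{unique} point $\vx(\vz)$, the value function $\psi^*$ is differentiable at $\vz$ and its gradient is obtained by differentiating the inner function at the maximizer, i.e. $\nabla \psi^*(\vz) = \nabla_{\vz} g\big(\vz, \vx(\vz)\big) = \vx(\vz)$. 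Thus the whole argument reduces to verifying the two hypotheses of Danskin's Theorem: existence and uniqueness of the maximizer $\vx(\vz) = \arg\max_{\vx \in \mathcal{X}} g(\vz, \vx)$.

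Both hypotheses follow from the strong convexity of $\psi$. For uniqueness, strong convexity of $\psi$ makes $g(\vz, \cdot)$ strongly concave, so it has at most one maximizer, which makes $\arg\max_{\vx \in \mathcal{X}} g(\vz, \vx)$ well-defined as a single vector. For existence, strong concavity forces $g(\vz, \vx) \to -\infty$ as $\|\vx\| \to \infty$; combined with $\mathcal{X}$ being closed, this coercivity guarantees that the supremum is attained even when $\mathcal{X}$ is unbounded. With both conditions established, Danskin applies verbatim and yields the claim.

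If one prefers a self-contained route that avoids quoting Danskin, I would instead fix $\vz_0$, set $\vx_0 = \arg\max_{\vx \in \mathcal{X}} g(\vz_0, \vx)$, and observe that for every $\vz$ one has $\psi^*(\vz) \geq g(\vz, \vx_0) = \psi^*(\vz_0) + \innp{\vz - \vz_0, \vx_0}$, so that $\vx_0$ is a subgradient of the convex function $\psi^*$ at $\vz_0$. One then upgrades this subgradient to a true gradient by recalling that the conjugate of a $\sigma$-strongly convex function is $(1/\sigma)$-smooth, hence differentiable, and a differentiable convex function has a singleton subdifferential equal to its gradient; therefore $\nabla \psi^*(\vz_0) = \vx_0$. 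I expect the only genuine subtlety, and thus the main obstacle, to be the existence of the maximizer when $\mathcal{X}$ is unbounded: this is precisely where strong convexity is indispensable, since without it the defining supremum could fail to be attained and the conclusion would degrade to a statement about subdifferentials rather than an honest gradient. Everything else is routine once existence and uniqueness are secured.
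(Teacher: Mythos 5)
Your proof is correct and follows the same route the paper takes: the paper states this as an unproved ``simple corollary of Danskin's Theorem'' (citing Proposition 4.15 of~\cite{Bertsekas2003}), and your first argument is exactly that derivation with the two hypotheses --- existence via coercivity from strong convexity and uniqueness via strict concavity --- properly verified. The alternative subgradient argument you sketch is also sound (and note it leans on Fact~\ref{fact:smoothness-of-cvx-conj}, which the paper states independently), but the primary approach matches the paper's.
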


\begin{fact}\label{fact:smoothness-of-cvx-conj}
If $\psi(\cdot)$ is $\mu$-strongly convex w.r.t. a norm $\|\cdot\|$ for $\mu > 0$, then $\psi^*(\cdot)$ is $\frac{1}{\mu}$-smooth w.r.t. the norm $\|\cdot\|_*$.
\end{fact}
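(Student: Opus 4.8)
The plan is to prove the \emph{equivalent} Lipschitz-gradient form of smoothness, i.e., that $\nabla \psi^*$ is $\frac{1}{\mu}$-Lipschitz from $\|\cdot\|_*$ to $\|\cdot\|$, which by Definition~\ref{def:smoothness} (applied to $\psi^*$ with the dual norm, and using $\|\cdot\|_{**}=\|\cdot\|$) is exactly $\frac1\mu$-smoothness of $\psi^*$ w.r.t. $\|\cdot\|_*$. The key tool is Fact~\ref{fact:danskin}: fixing two dual points $\vz_1,\vz_2$, I set $\vx_1 \defeq \nabla\psi^*(\vz_1)$ and $\vx_2 \defeq \nabla\psi^*(\vz_2)$, so that $\vx_i$ is precisely the maximizer of $\innp{\vz_i,\vx} - \psi(\vx)$ over $\mathcal{X}$. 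The goal then reduces to showing $\|\vx_1 - \vx_2\| \le \frac1\mu \|\vz_1 - \vz_2\|_*$.

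First I would write the first-order optimality conditions for these two constrained maximizations. Since $\vx_i$ maximizes the concave function $\innp{\vz_i, \cdot} - \psi(\cdot)$ over the convex set $\mathcal{X}$, the variational inequality $\innp{\vz_i - \nabla\psi(\vx_i),\, \vx - \vx_i} \le 0$ holds for all $\vx \in \mathcal{X}$. Testing the $i=1$ inequality at $\vx = \vx_2$ and the $i=2$ inequality at $\vx = \vx_1$ and adding the two yields
$$
\innp{\vz_1 - \vz_2,\; \vx_1 - \vx_2} \;\ge\; \innp{\nabla\psi(\vx_1) - \nabla\psi(\vx_2),\; \vx_1 - \vx_2}.
$$
Next I would invoke the monotonicity of the gradient implied by $\mu$-strong convexity of $\psi$ (obtained by summing the two strong-convexity inequalities of Definition~\ref{def:strong-convexity} at $\vx_1,\vx_2$), giving $\innp{\nabla\psi(\vx_1) - \nabla\psi(\vx_2),\, \vx_1 - \vx_2} \ge \mu\|\vx_1 - \vx_2\|^2$. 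Chaining this with the displayed inequality and bounding the left-hand side by the generalized Cauchy–Schwarz (H\"older) inequality $\innp{\vz_1 - \vz_2,\, \vx_1 - \vx_2} \le \|\vz_1 - \vz_2\|_* \,\|\vx_1 - \vx_2\|$ produces $\mu\|\vx_1 - \vx_2\|^2 \le \|\vz_1 - \vz_2\|_* \,\|\vx_1 - \vx_2\|$; dividing by $\|\vx_1 - \vx_2\|$ (the case $\vx_1 = \vx_2$ being trivial) gives the claimed Lipschitz bound.

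The main obstacle I anticipate is handling the \emph{constrained} optimization correctly: because the maximization defining $\psi^*$ is over $\mathcal{X}$ rather than all of $\mathbb{R}^n$, the maximizer need not satisfy $\nabla\psi(\vx_i) = \vz_i$, so I must use the variational-inequality form of optimality rather than a naive stationarity condition, and it is exactly the additive cancellation of the boundary terms upon summing the two inequalities that makes the argument go through. A secondary point of care is norm bookkeeping --- the gradient difference is measured in $\|\cdot\|$ while the argument difference of $\psi^*$ is measured in $\|\cdot\|_*$, so I should explicitly note the reflexive identity $\|\cdot\|_{**} = \|\cdot\|$ when translating the Lipschitz statement back into the smoothness statement of Definition~\ref{def:smoothness}.
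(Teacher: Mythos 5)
Your proof is correct. Note that the paper itself offers no proof of this Fact --- it is stated as a standard result from convex duality --- so there is no in-paper argument to compare against; your derivation is the classical one. The chain you use (characterizing $\nabla\psi^*(\vz_i)$ via Fact~\ref{fact:danskin}, writing the variational-inequality form of optimality to handle the constraint set $\mathcal{X}$, summing to get co-monotonicity, invoking strong monotonicity of $\nabla\psi$ from Definition~\ref{def:strong-convexity}, and finishing with H\"older) is exactly the standard route, and your two anticipated pitfalls --- the constrained maximizer not satisfying $\nabla\psi(\vx_i)=\vz_i$, and the $\|\cdot\|_{**}=\|\cdot\|$ bookkeeping (valid here since the paper works in finite dimensions) --- are precisely the points that need care. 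The only step you lean on implicitly is the passage from the Lipschitz-gradient form back to the quadratic-upper-bound form of Definition~\ref{def:smoothness}; since $\psi^*$ is defined on all of $\mathbb{R}^n$, the usual integration along the segment applies and the equivalence asserted in Definition~\ref{def:smoothness} is legitimate to invoke.
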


\begin{definition}\label{def:bregman-divergence}(Bregman Divergence)
$D_{\psi}(\vx, \vxh) \defeq \psi(\vx) - \psi(\vxh)-\innp{\nabla \psi(\vxh), \vx - \vxh}$, for $\vx \in \mathcal{X}, \vxh \in \mathcal{X}^o$, where $\mathcal{X}^o$ denotes the set of all points from $\mathcal{X}$ for which $\psi(\cdot)$ admits a (sub)gradient. 
\end{definition}
%
The Bregman divergence $D_{\psi}(\vx,\vy)$ captures the difference between $\psi(\vx)$ and its first order approximation at $\vy.$ Notice that, for a differentiable $\psi$, we have:
$
\nabla_{\vx} D_{\psi}(\vx,\vy) = \nabla \psi(\vx) - \nabla \psi(\vy).
$
The Bregman divergence $D_{\psi}(\vx,\vy)$ as a function $g_{\vy}(\vx)$ is convex. Its Bregman divergence is itself, i.e., $D_{g_{\vy}}(\vv, \vu) = D_{\psi}(\vv, \vu).$
\section{Improved Accelerated Method}\label{sec:algorithm}

In this section, we focus on the setting of smooth minimization. The case of smooth and strongly convex minimization is treated in Section~\ref{app:ssc-magdp}.

To design \agdp, we define an approximate duality gap, similar to~\cite{thegaptechnique,AXGD}, but allowing for  
an inexact gradient oracle  
according to Eq.~\eqref{eq:noisy}. 
The construction is based on maintaining three points at each iteration $k$: $\vx_k$ is the point at which the gradient is queried, while $(\vy_k,\vz_k)$ is the current primal-dual solution pair at the end of iteration $k.$ For this setup, the dual solution $\vz_k$ is a conic combination of the negative gradients seen so far, taken at an initial dual point $z_0 = \nabla \psi(\vx_0), $where $\vx_0$ is an arbitrary initial primal solution, i.e.,
\begin{align}\label{eq:z}
\vz_k = -\sum_{i=1}^k a_i \tnabla f(\vx_i) + \vz_0.
\end{align}
where the sequence $a_k > 0$, $A_k = \littlesum_{i=1}^k a_i$ will be specified later. By convention, $A_0 = 0$.

\subsection{Approximate Duality Gap}
The choice of sequences above immediately implies upper and lower bounds on optimum at each iteration $k.$ The upper bound is simply chosen as $U_k = f(\vy_k)$. For the lower bound, by convexity of $f(\cdot)$ (see Eq.~(\ref{eq:def-cvx})):
\begin{equation*}
f(\vx_*) \geq \frac{\littlesum_{i=1}^k a_i f(\vx_i) + \littlesum_{i=1}^k a_i \innp{\nabla f(\vx_i), \vx_* - \vx_i}}{A_k}.
\end{equation*}
To relate the lower bound to the output of the inexact oracle, it is useful to express the gradients $\nabla f(\vx_i)$ as $\nabla f(\vx_i) = \tnabla f(\vx_i) - \veta_i$. Adding and subtracting $\frac{1}{A_k} D_{\psi}(\vx_*, \vx_0)$ in the last equation, we have:
\begin{align*}
f(\vx_*) \geq  \frac{\littlesum_{i=1}^k a_i f(\vx_i) + \littlesum_{i=1}^k a_i \innp{\tnabla f(\vx_i), \vx_* - \vx_i}-\littlesum_{i=1}^k a_i\innp{\veta_i, \vx_* - \vx_i} + D_{\psi}(\vx_*, \vx_0) - D_{\psi}(\vx_*, \vx_0)}{A_k}.
\end{align*}
Finally, we can replace $\vx_*$ by a minimization over $\mathcal{X}$ to obtain our final lower bound:
\begin{align*}
f(\vx_*) &\geq \frac{\littlesum_{i=1}^k a_i f(\vx_i) - \littlesum_{i=1}^k a_i\innp{\veta_i, \vx_* - \vx_i} - D_{\psi}(\vx_*, \vx_0) + \min_{\vu \in \mathcal{X}}\left\{ \littlesum_{i=1}^k a_i \innp{\tnabla f(\vx_i), \vu - \vx_i} + D_{\psi}(\vu, \vx_0) \right\}}{A_k}\\
&\defeq L_k.
\end{align*}
Applying Fact~\ref{fact:danskin} and the definition of $\vz_k$ from (\ref{eq:z}), we have the following characterization of the last term of $L_k$.
\begin{proposition}\label{prop:arg-min-lb}
Let $\vz_k$ be defined as in~(\ref{eq:agdp}). Then:
\begin{align*}
\nabla\psi^*(\vz_k) = 
\arg \min_{\vu \in \mathcal{X}} &\Big\{ \littlesum_{i=1}^k a_i \innp{\tnabla f(\vx_i), \vu - \vx_i} + D_{\psi}(\vu, \vx_0) \Big\}.
\end{align*}
\end{proposition}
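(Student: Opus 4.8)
The plan is to reduce the minimization to the maximization appearing in Danskin's theorem (Fact~\ref{fact:danskin}) by absorbing every term that does not depend on the optimization variable $\vu$ into an additive constant, and then matching the single remaining linear term against the definition of $\vz_k$. First I would expand the Bregman divergence via Definition~\ref{def:bregman-divergence}, writing $D_{\psi}(\vu, \vx_0) = \psi(\vu) - \psi(\vx_0) - \innp{\nabla\psi(\vx_0), \vu - \vx_0}$, and distribute the inner products in $\littlesum_{i=1}^k a_i \innp{\tnabla f(\vx_i), \vu - \vx_i}$. The objective then cleanly splits into three groups: the term $\psi(\vu)$; a single linear term in $\vu$ of the form $\innp{\littlesum_{i=1}^k a_i \tnabla f(\vx_i) - \nabla\psi(\vx_0),\, \vu}$; and a collection of terms that are independent of $\vu$ (namely $-\littlesum_{i=1}^k a_i \innp{\tnabla f(\vx_i), \vx_i}$, together with $-\psi(\vx_0)$ and $\innp{\nabla\psi(\vx_0), \vx_0}$). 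Since this last group is constant with respect to $\vu$, it does not affect the $\arg\min$ and may be discarded.

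The crucial step is then to identify the linear coefficient. Using the initialization $\vz_0 = \nabla\psi(\vx_0)$ together with $\vz_k = -\littlesum_{i=1}^k a_i \tnabla f(\vx_i) + \vz_0$, the coefficient of $\vu$ is exactly $-\vz_k$. Thus minimizing the objective is equivalent to computing $\arg\min_{\vu \in \mathcal{X}}\{\psi(\vu) - \innp{\vz_k, \vu}\}$, which equals $\arg\max_{\vu \in \mathcal{X}}\{\innp{\vz_k, \vu} - \psi(\vu)\}$. By Fact~\ref{fact:danskin}, this maximizer is precisely $\nabla\psi^*(\vz_k)$, which is the claimed identity.

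There is no substantial obstacle here beyond careful bookkeeping of signs and constant terms; the entire content of the proposition is the algebraic observation that the accumulated negative gradients shifted by $\nabla\psi(\vx_0)$ reproduce $\vz_k$. The only point requiring minor attention is that the data-dependent shift terms $-a_i\innp{\tnabla f(\vx_i), \vx_i}$, which arise from the $-\vx_i$ offsets inherited from the lower bound $L_k$, must be correctly recognized as $\vu$-independent so that they are dropped rather than mistakenly carried into the argument of $\psi^*$.
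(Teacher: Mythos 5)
Your argument is correct and is exactly the one the paper intends: the paper gives no written proof beyond citing Fact~\ref{fact:danskin} and the definition of $\vz_k$, and your expansion of $D_{\psi}(\vu,\vx_0)$, identification of the linear coefficient as $-\vz_k$ via $\vz_0=\nabla\psi(\vx_0)$, and conversion of the $\arg\min$ into the $\arg\max$ defining $\nabla\psi^*(\vz_k)$ is precisely that omitted bookkeeping. No gaps.
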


The approximate duality gap is simply defined as $G_k = U_k - L_k$. Observe that, by construction of $U_k$ and $L_k$, $f(\vy_k)-f(\vx_*)\leq G_k$. Hence, to prove the convergence of the algorithm, it suffices to bound $G_k$. To do so, we will track the evolution of the quantity $A_k G_k,$ i.e., we will bound\footnote{From~\cite{thegaptechnique} it can be derived that $A_k G_k$ is a Lyapunov function for the continuous dynamic underlying \agdp, i.e., $E_k$  is the discretization error at iteration $k$.}   $E_k = A_k G_k - A_{k-1}G_{k-1}$, so that 
$$
G_k = \frac{A_1}{A_k}G_1 + \frac{\littlesum_{i=2}^k E_i}{A_k}.
$$

\subsection{The \agdp Algorithm}
The steps of \agdp~are defined as follows:
\begin{equation}\label{eq:agdp}\tag{\agdp}
\begin{gathered}
\vx_k = \frac{A_{k-1}}{A_k} \vy_{k-1} + \frac{a_k}{A_k}\nabla\psi^*(\vz_{k-1}),\\
\vz_k = \vz_{k-1} - a_k \tnabla f(\vx_k),\\
\vy_k = \frac{A_{k-1}}{A_k} \vy_{k-1} + \frac{a_k}{A_k}\nabla\psi^*(\vz_k),
\end{gathered}
\end{equation}
To seed \agdp, we let $\vx_1 = \vx_0$, $\vy_1 = \vv_1 = \nabla\psi^*(\vz_1)$.

\paragraph{Related Algorithms} Compared to Nesterov's \agd, \agdp~differs in the sequence $\vy_k$: \agd~sets $\vy_k = \mathrm{Grad}(\vx_k)$. The two algorithms are equivalent when $\psi(\vx) = \frac{\mu}{2}\|\vx - \vx_0\|^2$, $\frac{{a_k}^2}{A_k} = \frac{\mu}{L}$, and $\mathcal{X} = \mathbb{R}^n$, but in general they produce different sequences of points. Thus, \agdp~can be seen as a generalization of \agd. Compared to a more recent accelerated method \axgd~of \cite{AXGD}, \agdp~differs in sequences $\vy_k$ and $\vz_k$. In particular, in \axgd, $\vz_k = \vz_{k-1} - a_k \tnabla f(\vy_k)$, while $\vy_k = \frac{A_{k-1}}{A_k} \vy_{k-1} + \frac{a_k}{A_k}\nabla\psi^*(\vz_k - a_k \tnabla f(\vx_k))$. As \axgd~uses the gradients of $f(\cdot)$ at both sequences $\vx_k$ and $\vy_k$ to define $\vx_k$ and $\vy_k$, it is more wasteful: its oracle complexity is twice as high as that of \agd~and \agdp. Most closely related to \agdp~is the \acsa~algorithm~\cite{lan2012optimal}; namely, for some step sizes, \agdp~can be seen as a ``lazy'' (dual averaging) version of \acsa. {The relationship between \magdp~(see Section~\ref{app:ssc-magdp}) and \acsa~for smooth and strongly convex minimization~\cite{ghadimi2012optimal} is not immediately clear, due to the different parameter choices.}

\subsection{Convergence Analysis for \agdp}

To simplify the notation, from now on we denote:
$$
\vv_k \defeq \nabla\psi^*(\vz_k).
$$
We can now bound the change $E_k = A_k G_k - A_{k-1}G_{k-1}$ by decomposing it into two terms: $E_k \leq E_k^e + E_k^{\eta}$, where the latter term is due to the inexact nature of the gradient oracle. The following lemma allows us to bound these terms. 

\begin{lemma}\label{lemma:total-error}
Let $E_k^{\eta} = \innp{\veta_k, \vx_* - \vv_k}$ and 
$E_k^e = A_k(f(\vy_k)-f(\vx_k))
- A_k\innp{\nabla f(\vx_k), \vy_k - \vx_k} - D_{\psi}(\vv_{k}, \vv_{k-1}).$ Then $E_k \leq E_k^{\eta} + E_k^e$.
\end{lemma}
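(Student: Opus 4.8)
The plan is to expand $E_k = A_k G_k - A_{k-1}G_{k-1}$ directly from the definitions $G_k = U_k - L_k$, $U_k = f(\vy_k)$, and the lower bound $L_k$ derived above, and to match each piece against the claimed $E_k^{\eta}$ and $E_k^e$. Writing $A_k G_k = A_k f(\vy_k) - A_k L_k$ and using $A_k L_k = \littlesum_{i=1}^k a_i f(\vx_i) - \littlesum_{i=1}^k a_i\innp{\veta_i, \vx_* - \vx_i} - D_{\psi}(\vx_*,\vx_0) + \min_{\vu}\{\littlesum_{i=1}^k a_i \innp{\tnabla f(\vx_i), \vu - \vx_i} + D_{\psi}(\vu,\vx_0)\}$, I would form the telescoped difference term by term. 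The sums $\littlesum a_i f(\vx_i)$ and $\littlesum a_i \innp{\veta_i,\vx_*-\vx_i}$ telescope cleanly, contributing $a_k f(\vx_k)$ and $-a_k\innp{\veta_k,\vx_*-\vx_k}$ respectively, while the $-D_{\psi}(\vx_*,\vx_0)$ terms cancel across consecutive iterations. The genuinely nontrivial contribution is the difference of the two $\min_{\vu}$ terms.

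The key step is therefore to handle $\min_{\vu}\{\cdots\}_k - \min_{\vu}\{\cdots\}_{k-1}$. By Proposition~\ref{prop:arg-min-lb} together with Fact~\ref{fact:danskin}, the iteration-$k$ minimizer is $\vv_k = \nabla\psi^*(\vz_k)$ and the iteration-$(k{-}1)$ minimizer is $\vv_{k-1}$. I would evaluate the $k$-th minimum at its optimizer $\vv_k$ and the $(k{-}1)$-th minimum at $\vv_{k-1}$, so the difference equals $a_k\innp{\tnabla f(\vx_k), \vv_k - \vx_k} + \big[\littlesum_{i=1}^{k-1} a_i\innp{\tnabla f(\vx_i), \vv_k - \vv_{k-1}}\big] + D_{\psi}(\vv_k,\vx_0) - D_{\psi}(\vv_{k-1},\vx_0)$. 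The bracketed cross term involves the partial dual sum $\littlesum_{i=1}^{k-1} a_i \tnabla f(\vx_i) = \vz_0 - \vz_{k-1}$ by the definition~\eqref{eq:z}. The plan is to recognize that $\vv_{k-1}$ is the optimizer of the $(k{-}1)$-st problem, whose stationarity/optimality condition relates $\littlesum_{i=1}^{k-1} a_i\tnabla f(\vx_i)$ to $\nabla\psi(\vv_{k-1}) - \nabla\psi(\vx_0) = \nabla\psi(\vv_{k-1}) - \vz_0$, and to substitute this in. After substitution, the terms linear in $\vv_k - \vv_{k-1}$ should combine with the Bregman-divergence difference to produce exactly $-D_{\psi}(\vv_k,\vv_{k-1})$, using the three-point / Pythagorean identity $D_{\psi}(\vv_k,\vx_0) - D_{\psi}(\vv_{k-1},\vx_0) - \innp{\nabla\psi(\vv_{k-1}) - \nabla\psi(\vx_0), \vv_k - \vv_{k-1}} = D_{\psi}(\vv_k,\vv_{k-1})$.

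Collecting everything, the non-noise pieces assemble into $a_k f(\vx_k) - a_k\innp{\tnabla f(\vx_k), \vv_k - \vx_k} - D_{\psi}(\vv_k,\vv_{k-1}) + A_k f(\vy_k) - A_{k-1}f(\vy_{k-1})$, while the noise pieces give $a_k\innp{\veta_k, \vx_* - \vx_k}$. To reach the stated form I would then split $\tnabla f(\vx_k) = \nabla f(\vx_k) + \veta_k$ inside the inner-product term, so that $-a_k\innp{\tnabla f(\vx_k), \vv_k - \vx_k} = -a_k\innp{\nabla f(\vx_k), \vv_k - \vx_k} - a_k\innp{\veta_k, \vv_k - \vx_k}$; combining the two noise contributions yields $a_k\innp{\veta_k,\vx_* - \vx_k} - a_k\innp{\veta_k,\vv_k-\vx_k} = a_k\innp{\veta_k,\vx_* - \vv_k}$, which matches $E_k^{\eta}$ up to the factor $a_k$ — so I would check whether the intended normalization carries an $a_k$, or whether $E_k^{\eta}$ is already absorbing it. Finally I would use the \agdp~update $\vy_k = \frac{A_{k-1}}{A_k}\vy_{k-1} + \frac{a_k}{A_k}\vv_k$ (recalling $\vv_k = \nabla\psi^*(\vz_k)$) together with convexity of $f$ to bound the deterministic remainder by $A_k(f(\vy_k) - f(\vx_k)) - A_k\innp{\nabla f(\vx_k), \vy_k - \vx_k} - D_{\psi}(\vv_k,\vv_{k-1})$, which is precisely $E_k^e$; here convexity is applied to replace $a_k f(\vx_k) + A_{k-1} f(\vy_{k-1})$ by a lower bound linearized at $\vx_k$ along the segment joining $\vy_{k-1}$ and $\vv_k$.

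The main obstacle I anticipate is the careful bookkeeping of the $\min_{\vu}$-difference step: correctly invoking the optimality condition for $\vv_{k-1}$ and matching the resulting linear-in-$(\vv_k - \vv_{k-1})$ terms with the Bregman telescoping to collapse them exactly into $-D_{\psi}(\vv_k,\vv_{k-1})$, while simultaneously tracking the noise terms $\veta_i$ through the dual-sum identity, is where sign errors and misattributed $a_k$ factors are most likely to creep in. The convexity step that produces $E_k^e$ is routine by comparison, but getting the convex combination coefficients to line up with the \agdp~definition of $\vy_k$ requires attention.
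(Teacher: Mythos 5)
Your proposal follows essentially the same route as the paper's proof: telescope $A_kG_k$, handle the difference of the two $\min_{\vu}$ terms via the optimality of $\vv_{k-1}$ together with the Bregman three-point identity (the paper packages this as $D_{m_{k-1}}(\vv_k,\vv_{k-1})=D_{\psi}(\vv_k,\vv_{k-1})$ plus $\innp{\nabla m_{k-1}(\vv_{k-1}),\vv_k-\vv_{k-1}}\geq 0$), split $\tnabla f(\vx_k)=\nabla f(\vx_k)+\veta_k$ so the noise collapses to $a_k\innp{\veta_k,\vx_*-\vv_k}$, and finish with convexity and the \agdp\ update for $\vy_k$ — and you correctly spot that the lemma statement's $E_k^{\eta}$ is missing the factor $a_k$ that the proof and all later uses carry. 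The only caveat is that your "substitute the stationarity condition, producing \emph{exactly} $-D_{\psi}(\vv_k,\vv_{k-1})$" holds as an equality only for unconstrained $\mathcal{X}$; for general constrained $\mathcal{X}$ the optimality condition is the variational inequality $\innp{\nabla m_{k-1}(\vv_{k-1}),\vv_k-\vv_{k-1}}\geq 0$, which still yields the claimed bound since the term enters $E_k$ with the favorable sign.
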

\begin{proof}
Let $m_k(\vx) = \littlesum_{i=1}^k a_i \innp{\tnabla f(\vx_i), \vu - \vx_i} + D_{\psi}(\vu, \vx_0)$ denote the function under the minimum in the lower bound. By Proposition~\ref{prop:arg-min-lb}, $\vv_k = \nabla \psi^*(\vz_k) = \arg\min_{\vx \in \mathcal{X}}m_k(\vx)$. Observe that $m_k(\vx) = a_k \innp{\tnabla f(\vx_k), \vx - \vx_k} + m_{k-1}(\vx)$. By the definition of Bregman divergence:
\begin{align*}
 m_{k-1}(\vv_k) = m_{k-1}(\vv_{k-1})+ \innp{ \nabla m_{k-1}(\vv_{k-1}), \vv_k - \vv_{k-1}}
+ D_{m_{k-1}}(\vv_k, \vv_{k-1}).
\end{align*}
As Bregman divergence is blind to linear and zero-order terms, we have that  $D_{m_{k-1}}(\vv_k, \vv_{k-1}) = D_{\psi}(\vv_k, \vv_{k-1})$. By Proposition~\ref{prop:arg-min-lb}, $\vv_{k-1} = \arg\min_{\vx \in \mathcal{X}}m_{k-1}(\vx)$, and hence $\innp{ \nabla m_{k-1}(\vv_{k-1}), \vv_k - \vv_{k-1}}\geq 0$. Therefore, 
\begin{align*}
 m_k(\vv_k) \geq & m_{k-1}(\vv_{k-1})+ a_k \innp{\tnabla f(\vx_k), \vv_k - \vx_k}+ D_{\psi}(\vv_{k}, \vv_{k-1}).
\end{align*}
Using the definition of $\tnabla f(\vx_k)$, the change in the lower bound is:
\begin{equation}\label{eq:lb-change}
\begin{aligned}
A_k L_k - A_{k-1}L_{k-1} \geq a_k f(\vx_{k})+ a_k \innp{\nabla f(\vx_k), \vv_k - \vx_k}+ D_{\psi}(\vv_{k}, \vv_{k-1}) - a_k\innp{\veta_k, \vx_* - \vv_k}.
\end{aligned}
\end{equation}
For the change in the upper bound, we have:
\begin{equation}\label{eq:ub-change}
\begin{aligned}
A_k U_k - A_{k-1}U_{k-1} =& A_k f(\vy_k) - A_{k-1}f(\vy_{k-1})\\
=& a_k f(\vx_k) + A_k(f(\vy_k) - f(\vx_k))+ A_{k-1}(f(\vx_k)-f(\vy_{k-1})).
\end{aligned}
\end{equation}
By convexity of $f(\cdot)$:
\begin{equation}\label{eq:adgt-pf-convexity}
\begin{aligned}
f(\vx_k)-f(\vy_{k-1}) \leq & \innp{\nabla f(\vx_{k}), \vx_k - \vy_{k-1}}.
\end{aligned}
\end{equation}
Combining (\ref{eq:lb-change})-(\ref{eq:adgt-pf-convexity}) and (\ref{eq:agdp}):
\begin{align*}
A_kG_k - A_{k-1}G_{k-1} \leq  a_k \innp{\veta_k, \vx_* - \vv_k} - D_{\psi}(\vv_{k}, \vv_{k-1})+A_k(f(\vy_k)-f(\vx_k))
- A_k\innp{\nabla f(\vx_k), \vy_k - \vx_k}, 
\end{align*}
as claimed.
\end{proof}
%
The last piece that is needed for the analysis is the bound on the initial gap $G_1$, obtained in the following proposition.
\begin{proposition}
$A_1G_1 \leq D_{\psi}(\vx_*, \vx_0) + E_1^{\eta} + E_1^e$, where $E_1^{\eta}$ is defined as in Lemma~\ref{lemma:total-error} and $E_1^e = A_1(f(\vy_1) - f(\vx_1) - \innp{\nabla f(\vx_1), \vv_1 - \vx_1}) - D_{\psi}(\vv_1, \vx_0)$.
\end{proposition}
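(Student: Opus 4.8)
The plan is to establish the statement by directly computing $A_1 G_1$, exploiting the fact that $A_0 = 0$ (hence $A_0 G_0 = 0$), so that $A_1 G_1$ is precisely the first increment of the tracked quantity $A_k G_k$. This is the ``base case'' counterpart of Lemma~\ref{lemma:total-error}, and I expect it to hold with \emph{equality}; the $\leq$ in the statement is then immediate. First I would write $A_1 G_1 = A_1 U_1 - A_1 L_1 = a_1 f(\vy_1) - A_1 L_1$, using $A_1 = a_1$ and $U_1 = f(\vy_1)$. Next I would expand $A_1 L_1$ from the definition of $L_k$ and evaluate its minimization term exactly at the minimizer $\vv_1 = \nabla\psi^*(\vz_1)$ furnished by Proposition~\ref{prop:arg-min-lb}:
\begin{align*}
\min_{\vu \in \mathcal{X}}\Big\{ a_1 \innp{\tnabla f(\vx_1), \vu - \vx_1} + D_{\psi}(\vu, \vx_0) \Big\} = a_1 \innp{\tnabla f(\vx_1), \vv_1 - \vx_1} + D_{\psi}(\vv_1, \vx_0).
\end{align*}
I would then substitute $\tnabla f(\vx_1) = \nabla f(\vx_1) + \veta_1$ and sort the resulting terms by type.

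The decisive step is bookkeeping rather than any hard estimate. The two noise contributions, $a_1\innp{\veta_1, \vx_* - \vx_1}$ coming from the explicit perturbation term in $L_1$ and $-a_1\innp{\veta_1, \vv_1 - \vx_1}$ coming from the split of $\tnabla f(\vx_1)$, collapse into $a_1\innp{\veta_1, \vx_* - \vv_1}$, which is the $k=1$ instance of $E_k^\eta$. The deterministic remainder $a_1\big(f(\vy_1) - f(\vx_1) - \innp{\nabla f(\vx_1), \vv_1 - \vx_1}\big) - D_{\psi}(\vv_1, \vx_0)$ is exactly $E_1^e$ (recalling $A_1 = a_1$, and that $\vy_1 = \vv_1$ and $\vv_0 = \vx_0$ by the seeding conventions, which is what reconciles this form with the $E_k^e$ of Lemma~\ref{lemma:total-error}). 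Finally, the term $-D_{\psi}(\vx_*, \vx_0)$ sitting inside $L_1$ passes through $G_1 = U_1 - L_1$ as $+D_{\psi}(\vx_*, \vx_0)$. Assembling the three groups gives $A_1 G_1 = D_{\psi}(\vx_*, \vx_0) + E_1^\eta + E_1^e$.

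The only subtlety worth flagging is why $D_{\psi}(\vx_*, \vx_0)$ appears here but not in the general increment of Lemma~\ref{lemma:total-error}: in the telescoping step the $D_{\psi}(\vx_*, \vx_0)$ carried by $A_k L_k$ and $A_{k-1} L_{k-1}$ cancel, whereas at $k=1$ there is no $A_0 L_0$ to cancel against, so it survives. For the same reason the convexity inequality~\eqref{eq:adgt-pf-convexity} used in the lemma (which references $\vy_{k-1}$) has no analogue at $k=1$, so no inequality is introduced and the base case is a clean identity, with the stated bound following a fortiori.
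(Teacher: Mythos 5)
Your computation is correct and is exactly the ``straightforward application of the previously introduced definitions'' that the paper offers as its entire proof: expand $A_1L_1$ via the minimizer $\vv_1=\nabla\psi^*(\vz_1)$, split $\tnabla f(\vx_1)=\nabla f(\vx_1)+\veta_1$, and collect terms to get equality (consistent with the equality the paper itself uses for $A_1G_1$ in the proof of Theorem~\ref{thm:noiseless-convergence}). Your reading $E_1^{\eta}=a_1\innp{\veta_1,\vx_*-\vv_1}$ (with the factor $a_1$, which the statement of Lemma~\ref{lemma:total-error} omits but its proof and all later uses include) is the intended one.
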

The proof is a straightforward application of the previously introduced definitions.

\subsection{Convergence of \agdp~with Exact Oracle }

To prove the convergence of the method in the noiseless case, in this section we assume that $\veta_k = \zeros$, and, consequently, $E_k^{\eta} = 0$. Hence, to obtain a convergence bound for \agdp, we only need to bound $E_k^e$. 
\begin{theorem}\label{thm:noiseless-convergence}
Let $f:\mathcal{X}\rightarrow \mathbb{R}$ be an $L$-smooth function and let $\vx_0 \in \mathcal{X}$ be an arbitrary initial point. If sequences $\vx_k, \vy_k, \vz_k$ evolve according to (\ref{eq:agdp}) for some $\mu$-strongly convex function $\psi(\cdot)$, $\veta_k = \zeros$, and $\frac{{a_k}^2}{A_k} \leq \frac{\mu}{L}$, then $\forall k \geq 1$:
\begin{equation*}
f(\vy_k) - f(\vx_*) \leq \frac{D_{\psi}(\vx_*, \vx_0)}{A_k}.
\end{equation*}
\end{theorem}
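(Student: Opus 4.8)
The plan is to exploit the telescoping identity $A_k G_k = A_1 G_1 + \littlesum_{i=2}^k E_i$ already assembled in the excerpt, together with the fact that $f(\vy_k) - f(\vx_*) \le G_k$ by construction of $U_k, L_k$. Since we are in the noiseless case $\veta_k = \zeros$, Lemma~\ref{lemma:total-error} gives $E_k \le E_k^e$ (with $E_k^{\eta} = 0$), and the initial-gap proposition gives $A_1 G_1 \le D_{\psi}(\vx_*, \vx_0) + E_1^e$. Hence it suffices to prove that every error term is non-positive: if $E_k^e \le 0$ for all $k \ge 1$, then
\[
A_k G_k \le A_1 G_1 + \littlesum_{i=2}^k E_i^e \le A_1 G_1 \le D_{\psi}(\vx_*, \vx_0) + E_1^e \le D_{\psi}(\vx_*, \vx_0),
\]
and dividing by $A_k$ yields the claim. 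So the whole argument reduces to the single inequality $E_k^e \le 0$.

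The key step, and the only place where smoothness, strong convexity of $\psi$, and the step size enter, is bounding
\[
E_k^e = A_k\big(f(\vy_k) - f(\vx_k) - \innp{\nabla f(\vx_k),\, \vy_k - \vx_k}\big) - D_{\psi}(\vv_k, \vv_{k-1}).
\]
First I would read off from the \agdp~updates the identity $\vy_k - \vx_k = \frac{a_k}{A_k}(\vv_k - \vv_{k-1})$, which follows by subtracting the expressions for $\vx_k$ and $\vy_k$: both share the term $\frac{A_{k-1}}{A_k}\vy_{k-1}$, and $\nabla\psi^*(\vz_{k-1}) = \vv_{k-1}$, $\nabla\psi^*(\vz_k) = \vv_k$. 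Applying $L$-smoothness (Definition~\ref{def:smoothness}) to the bracket gives $f(\vy_k) - f(\vx_k) - \innp{\nabla f(\vx_k),\, \vy_k - \vx_k} \le \frac{L}{2}\|\vy_k - \vx_k\|^2 = \frac{L}{2}\frac{a_k^2}{A_k^2}\|\vv_k - \vv_{k-1}\|^2$, while $\mu$-strong convexity of $\psi$ gives $D_{\psi}(\vv_k, \vv_{k-1}) \ge \frac{\mu}{2}\|\vv_k - \vv_{k-1}\|^2$. Substituting both,
\[
E_k^e \le \tfrac{1}{2}\|\vv_k - \vv_{k-1}\|^2\Big(\tfrac{L a_k^2}{A_k} - \mu\Big),
\]
which is $\le 0$ precisely because the hypothesis $\frac{a_k^2}{A_k} \le \frac{\mu}{L}$ makes the parenthesized factor non-positive.

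For the base case I would verify $E_1^e \le 0$ by the same computation, using the seeding $\vx_1 = \vx_0$ and $\vy_1 = \vv_1$. Then $E_1^e = A_1\big(f(\vv_1) - f(\vx_0) - \innp{\nabla f(\vx_0),\, \vv_1 - \vx_0}\big) - D_{\psi}(\vv_1, \vx_0)$, and smoothness together with strong convexity give $E_1^e \le \frac{1}{2}\|\vv_1 - \vx_0\|^2(L A_1 - \mu)$; since $A_1 = a_1$, the step-size condition reads $a_1 = \frac{a_1^2}{A_1} \le \frac{\mu}{L}$, i.e.\ $L A_1 \le \mu$, so again $E_1^e \le 0$.

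I do not expect a serious obstacle. Once the update identity $\vy_k - \vx_k = \frac{a_k}{A_k}(\vv_k - \vv_{k-1})$ is in hand, smoothness and the strong convexity of $\psi$ line up the two squared-norm terms so that the step-size condition is exactly what cancels them with the right sign. The only point requiring care is the bookkeeping at $k=1$, where $\frac{a_k^2}{A_k}$ degenerates to $a_1$ and the initial gap must absorb the $D_{\psi}(\vx_*, \vx_0)$ term; everything else is a direct consequence of the lemma and proposition already established above.
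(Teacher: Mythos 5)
Your proposal is correct and follows essentially the same route as the paper's proof: bound $E_k^e$ via smoothness, the identity $\vy_k - \vx_k = \frac{a_k}{A_k}(\vv_k - \vv_{k-1})$, and the strong-convexity lower bound on $D_{\psi}(\vv_k,\vv_{k-1})$, so that the step-size condition makes $E_k^e \le 0$, then absorb the initial gap into $D_{\psi}(\vx_*,\vx_0)$. Your explicit verification that $E_1^e \le 0$ at $k=1$ is a detail the paper leaves implicit but is handled correctly.
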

\begin{proof}
By smoothness of $f(\cdot)$, $f(\vy_k)-f(\vx_k) - \innp{\nabla f(\vx_k), \vy_k - \vx_k} \leq \frac{L}{2}\|\vy_k - \vx_k\|^2$. Hence:
\begin{equation*}
E_k = E_k^e \leq A_k \frac{L}{2}\|\vy_k - \vx_k\|^2 - D_{\psi}(\vv_k, \vv_{k-1}). 
\end{equation*}
From (\ref{eq:agdp}), $\vy_k - \vx_k = \frac{a_k}{A_k}(\vv_k - \vv_{k-1})$. As $D_{\psi}(\vv_k, \vv_{k-1})\geq \frac{\mu}{2}\|\vv_k - \vv_{k-1}\|^2$, it follows that:
\begin{equation*}
E_k \leq \frac{1}{2}\Big(\frac{{a_k}^2 L}{A_k} - {\mu}\Big)\|\vv_k - \vv_{k-1}\|^2 \leq 0,
\end{equation*}
as $\frac{{a_k}^2}{A_k} \leq \frac{\mu}{L}$ by the theorem assumptions. Thus: $G_k \leq \frac{A_1}{A_k}G_1$ and it remains to bound $A_1 G_1$, which is just:
\begin{align*}
A_1 G_1 = A_1 (f(\vy_1) - f(\vx_1) - \innp{\nabla f(\vx_1), \vv_1 - \vx_1})- D_{\psi}(\vv_1, \vx_0) + D_{\psi}(\vx_*, \vx_0)\leq D_{\psi}(\vx_*, \vx_0),
\end{align*}
as $\vy_1 = \vv_1$ and $\vx_1 = \vx_0$. 
\end{proof}
Observe that for $a_k = \frac{\mu}{L}\cdot \frac{k+1}{2}$ we recover the standard $1/k^2$ convergence rate of accelerated methods. 



\subsection{Convergence of \agdp~with Inexact Oracle}

In this subsection, we focus on bounding the error $E_k^{\eta}$ that is accrued due to the additive noise $\veta_k$. Additional results (including other models of noise) can be found in Section~\ref{sec:models}. From Lemma~\ref{lemma:total-error}, $E_k^{\eta} = a_k \innp{\veta_k, \vx_* - \vv_k}$, and we have the following:
\begin{proposition}\label{prop:noise-bnd-diam}
Let $f:\mathcal{X}\rightarrow \mathbb{R}$ be an $L$-smooth function and let $\vx_0 \in \mathcal{X}$ be an arbitrary initial point. If sequences $\vx_k, \vy_k, \vz_k$ evolve according to (\ref{eq:agdp}) for some $\mu$-strongly convex function $\psi(\cdot)$, where $\veta_k$'s are independent, $R_{\vx_*} = \max_{\vx\in \mathcal{X}}\|\vx - \vx_*\|$, and $\frac{{a_k}^2}{A_k} \leq \frac{\mu}{L}$, then $\forall k \geq 1$:
\begin{gather*}
\mathbb{E}\left[f(\vy_k) - f(\vx_*)\right] \leq \frac{D_{\psi}(\vx_*, \vx_0)}{A_k} + R_{\vx_*}\frac{\sum_{i=1}^k a_i\mathbb{E}[\|\veta_i\|_*]}{A_k},\\
\mathrm{Var}\left[ f(\vy_k) - f(\vx_*) \right] 
\leq {{R_{\vx_*}}^2}\frac{\sum_{i=1}^k {a_i}^2\mathbb{E}[\|\veta_i\|_*^2]}{{A_k}^2}.
\end{gather*}
\end{proposition}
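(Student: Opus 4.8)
The plan is to build on the convergence machinery already established and simply track the extra error term $E_k^\eta$ through the telescoping identity for $G_k$. By Lemma~\ref{lemma:total-error} and the exact-oracle analysis in Theorem~\ref{thm:noiseless-convergence}, the deterministic part $E_k^e$ is already controlled: under the step-size condition $\frac{a_k^2}{A_k}\le\frac{\mu}{L}$ we have $E_k^e\le 0$, and the initial-gap proposition gives $A_1 G_1 \le D_\psi(\vx_*,\vx_0) + E_1^\eta$. Combining with the telescoping formula $A_k G_k = A_1 G_1 + \sum_{i=2}^k E_i$ and $f(\vy_k)-f(\vx_*)\le G_k$, I would obtain the pointwise (pre-expectation) bound
\begin{equation*}
f(\vy_k)-f(\vx_*) \le \frac{D_\psi(\vx_*,\vx_0)}{A_k} + \frac{\sum_{i=1}^k E_i^\eta}{A_k},
\end{equation*}
where $E_i^\eta = a_i\innp{\veta_i,\vx_*-\vv_i}$.

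First I would bound the expectation. Taking $\mathbb{E}$ of the displayed inequality, the deterministic term is unchanged and it remains to control $\mathbb{E}\big[\sum_i a_i\innp{\veta_i,\vx_*-\vv_i}\big]$. By Cauchy--Schwarz (the generalized Hölder bound for the dual-norm pair), $\innp{\veta_i,\vx_*-\vv_i}\le \|\veta_i\|_*\,\|\vx_*-\vv_i\|\le \|\veta_i\|_*\,R_{\vx_*}$, since $\vv_i\in\mathcal X$ gives $\|\vx_*-\vv_i\|\le R_{\vx_*}$. Summing and taking expectations yields the first claimed inequality directly; note this step does not even require the independence hypothesis.

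For the variance I would again start from the pointwise bound. Since the deterministic term $D_\psi(\vx_*,\vx_0)/A_k$ is constant, $\mathrm{Var}[f(\vy_k)-f(\vx_*)] = \mathrm{Var}\big[\frac{1}{A_k}\sum_{i=1}^k E_i^\eta\big]$ — but here one must be careful, because $f(\vy_k)-f(\vx_*)$ is only bounded above by, not equal to, the right-hand side, so the variance of the left side is not literally the variance of the bound. The clean route is to observe that $\vv_i$ depends only on $\veta_1,\dots,\veta_i$, so with the independence assumption the summands $a_i\innp{\veta_i,\vx_*-\vv_i}$ form terms whose cross-covariances I would argue away (e.g. by conditioning on the past and using that $\vv_i$ is measurable w.r.t.\ earlier noise while $\veta_i$ is fresh). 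Each term satisfies $\mathbb{E}[a_i^2\innp{\veta_i,\vx_*-\vv_i}^2]\le a_i^2 R_{\vx_*}^2\,\mathbb{E}[\|\veta_i\|_*^2]$, so summing the per-term second moments and dividing by $A_k^2$ produces the stated bound.

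The main obstacle is the variance step: turning the inequality $f(\vy_k)-f(\vx_*)\le (\text{const}) + \frac{1}{A_k}\sum_i E_i^\eta$ into a genuine variance bound, and justifying that the cross terms vanish. The subtlety is the adaptive dependence of $\vv_i$ on the earlier noise $\veta_1,\dots,\veta_{i-1}$, which means the summands are not independent even though the $\veta_i$ are; the martingale/conditioning argument (showing $\mathbb{E}[\innp{\veta_i,\vx_*-\vv_i}\innp{\veta_j,\vx_*-\vv_j}]$ factors appropriately for $i\ne j$) is what carries the weight, together with a careful statement of what ``variance of the error'' means when only an upper bound is available. The expectation bound, by contrast, is essentially immediate from Cauchy--Schwarz and the diameter bound.
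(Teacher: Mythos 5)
Your proposal is correct in substance and follows the paper's own proof: the paper likewise starts from the pointwise bound $A_k G_k \leq D_{\psi}(\vx_*,\vx_0) + \sum_{i=1}^k a_i\innp{\veta_i,\vx_*-\vv_i}$, applies norm duality and the diameter bound $\|\vx_*-\vv_i\|\le R_{\vx_*}$ for the expectation (and indeed does not need independence there, as its subsequent remark confirms), and for the variance simply invokes $\mathrm{Var}[aX+bY]=a^2\mathrm{Var}[X]+b^2\mathrm{Var}[Y]$ for independent summands together with $\mathrm{Var}[X]\le\mathbb{E}[X^2]$. The subtleties you flag in the variance step --- that $\vv_i$ depends adaptively on $\veta_1,\dots,\veta_i$ so the summands are not literally independent, and that the variance of an upper bound does not immediately control the variance of the quantity itself --- are genuine, but the paper's one-line variance argument elides them in exactly the same way, so your treatment matches (and is arguably more candid than) the original.
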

\begin{proof}
From Theorem~\ref{thm:noiseless-convergence} and Lemma~\ref{lemma:total-error}:
\begin{equation}\label{eq:accumulate}
A_k G_k \leq D_{\psi}(\vx_*, \vx_0) + \sum_{i=1}^k a_i \innp{\veta_i, \vx_* - \vv_i}
\end{equation}
The bound on the expectation follows by applying $\innp{\vx, \vy} \leq \|\vx\|\|\vy\|_*$ (by the duality of norms), linearity of expectation, and $A_k = \littlesum_{i=1}^k a_i$. The bound on the variance follows by, in addition, using the standard facts that  $\mathrm{Var}[aX + bY] = a^2 \mathrm{Var}[X] + b^2 \mathrm{Var}[Y]$ and $\mathrm{Var}[X]\leq \mathbb{E}[X^2]$, where $a$, $b$ are constants and $X$, $Y$ are independent random variables.
\end{proof}
\begin{remark}
Observe that if $\mathbb{E}[\|\veta_i\|_*]\leq M$, $\mathbb{E}[\|\veta_i\|_*^2]\leq \sigma^2$, $\forall i$, then $\mathbb{E}\left[f(\vy_k) - f(\vx_*)\right] \leq \frac{D_{\psi}(\vx_*, \vx_0)}{A_k} + R_{\vx_*}M$ and $\mathrm{Var}\left[ f(\vy_k) - f(\vx_*) \right] 
\leq (R_{\vx_*}\sigma)^2\frac{\sum_{i=1}^k {a_i}^2}{{A_k}^2}$. The same bound on $\mathbb{E}\left[f(\vy_k) - f(\vx_*)\right]$ as in Prop.~\ref{prop:noise-bnd-diam} (and the special case stated here) holds \emph{even if $\veta_k$'s are not independent}.
\end{remark}

The bound from Proposition~\ref{prop:noise-bnd-diam} is mainly useful when $R_{\vx_*}$ is bounded, which is the case when, e.g., the diameter of $\mathcal{X}$ is bounded. For the case of unconstrained optimization (i.e., when $\mathcal{X} = \mathbb{R}^n$), the bound from Proposition~\ref{prop:noise-bnd-diam} is uninformative. Hence, we derive another bound that is independent of $R_{\vx_*}$, but it requires that the noise samples $\veta_k$ are both zero-mean and independent.\footnote{The assumption that $\veta_k$'s are independent can be relaxed -- see Remark~\ref{remark:indep-relax} below and Section~\ref{app:gen-stochastic-models}.}

\begin{lemma}\label{lemma:unconstr-noise-bnd}
Let $f:\mathcal{X}\rightarrow \mathbb{R}$ be an $L$-smooth function and let $\vx_0 \in \mathcal{X}$ be an arbitrary initial point. If sequences $\vx_k, \vy_k, \vz_k$ evolve according to (\ref{eq:agdp}) for some $\mu$-strongly convex function $\psi(\cdot)$, where $\veta_k$'s are zero-mean independent random variables and $\frac{{a_k}^2}{A_k} \leq \frac{\mu}{L}$, then $\forall k \geq 1$:
\begin{equation*}
\mathbb{E}\left[f(\vy_k) - f(\vx_*)\right] \leq \frac{D_{\psi}(\vx_*, \vx_0)}{A_k} + \frac{\sum_{i=1}^k {a_i}^2\mathbb{E}[\|\veta_i\|_*^2]}{\mu A_k}.
\end{equation*}
\end{lemma}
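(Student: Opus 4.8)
The starting point is the accumulated bound~\eqref{eq:accumulate} from the proof of Proposition~\ref{prop:noise-bnd-diam}, namely $A_k G_k \leq D_{\psi}(\vx_*, \vx_0) + \sum_{i=1}^k a_i \innp{\veta_i, \vx_* - \vv_i}$, combined with $f(\vy_k) - f(\vx_*) \leq G_k$. After taking expectations and dividing by $A_k$, it suffices to show that $\mathbb{E}\big[\sum_{i=1}^k a_i \innp{\veta_i, \vx_* - \vv_i}\big] \leq \frac{1}{\mu}\sum_{i=1}^k a_i^2\,\mathbb{E}[\|\veta_i\|_*^2]$. The difficulty -- and the reason this diameter-free bound needs the zero-mean and independence hypotheses -- is that one cannot simply invoke $\mathbb{E}[\veta_i] = \zeros$ to annihilate each term: the iterate $\vv_i = \nabla\psi^*(\vz_i)$ with $\vz_i = \vz_{i-1} - a_i\tnabla f(\vx_i)$ depends on $\veta_i$ through the corrupted gradient, so $\veta_i$ and $\vv_i$ are correlated.

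The plan is to decouple this correlation by comparing $\vv_i$ against a \emph{noise-free} surrogate. I would define $\vzh_i \defeq \vz_{i-1} - a_i \nabla f(\vx_i)$ and $\vvh_i \defeq \nabla\psi^*(\vzh_i)$, which replace the corrupted gradient $\tnabla f(\vx_i)$ by the true gradient $\nabla f(\vx_i)$ at the \emph{same} query point $\vx_i$. Since $\vx_i$, $\vy_{i-1}$, and $\vz_{i-1}$ are all determined by $\veta_1, \dots, \veta_{i-1}$ (by the recursions in~\eqref{eq:agdp}), the surrogate $\vvh_i$ is a function of the noise realized strictly before step $i$ and is therefore independent of $\veta_i$. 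Writing $\innp{\veta_i, \vx_* - \vv_i} = \innp{\veta_i, \vx_* - \vvh_i} + \innp{\veta_i, \vvh_i - \vv_i}$ then splits each summand into a piece with the right independence structure and a piece measuring the perturbation the noise induces on the iterate.

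For the first piece, independence of $\veta_i$ from $(\vx_*, \vvh_i)$ together with $\mathbb{E}[\veta_i] = \zeros$ gives $\mathbb{E}[\innp{\veta_i, \vx_* - \vvh_i}] = 0$. For the second piece, observe that $\vz_i - \vzh_i = -a_i(\tnabla f(\vx_i) - \nabla f(\vx_i)) = -a_i\veta_i$, so the two dual points differ by exactly the scaled noise. Fact~\ref{fact:smoothness-of-cvx-conj} states that $\psi^*$ is $\frac{1}{\mu}$-smooth w.r.t.\ $\|\cdot\|_*$, i.e.\ $\nabla\psi^*$ is $\frac{1}{\mu}$-Lipschitz, so $\|\vvh_i - \vv_i\| = \|\nabla\psi^*(\vzh_i) - \nabla\psi^*(\vz_i)\| \leq \frac{1}{\mu}\|\vzh_i - \vz_i\|_* = \frac{a_i}{\mu}\|\veta_i\|_*$. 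Combined with the duality inequality $\innp{\veta_i, \vvh_i - \vv_i} \leq \|\veta_i\|_*\,\|\vvh_i - \vv_i\|$, this bounds the second piece by $\frac{a_i}{\mu}\|\veta_i\|_*^2$. Hence $\mathbb{E}[a_i\innp{\veta_i, \vx_* - \vv_i}] \leq \frac{a_i^2}{\mu}\mathbb{E}[\|\veta_i\|_*^2]$; summing over $i$ and dividing by $A_k$ yields the claim.

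The main obstacle is conceptual rather than computational: correctly choosing the noise-free surrogate $\vvh_i$ so that it is independent of $\veta_i$ yet differs from $\vv_i$ by a controllable, purely noise-driven displacement. Once the decomposition $\vx_* - \vv_i = (\vx_* - \vvh_i) + (\vvh_i - \vv_i)$ is in place, the zero-mean property disposes of the first term, and the Lipschitz continuity of $\nabla\psi^*$ -- which is exactly where the strong-convexity parameter $\mu$ enters -- controls the second. I would also verify that the argument invokes independence only in the weak ``independent of the past'' sense, matching the footnote's comment that this hypothesis can be relaxed.
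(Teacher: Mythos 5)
Your proposal is correct and follows essentially the same route as the paper: you construct the identical noise-free surrogate $\vvh_i = \nabla\psi^*(\vz_{i-1} - a_i\nabla f(\vx_i)) = \nabla\psi^*(\vz_i + a_i\veta_i)$, use the decomposition $\vx_* - \vv_i = (\vx_* - \vvh_i) + (\vvh_i - \vv_i)$, kill the first term via zero-mean plus independence, and bound the second via the $\frac{1}{\mu}$-smoothness of $\psi^*$ (Fact~\ref{fact:smoothness-of-cvx-conj}). The only difference is that you spell out the Lipschitz step and the norm duality explicitly, which the paper compresses into one line.
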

\begin{proof}
Let $\vvh_k = \nabla\psi^*(\vz_k + a_k \veta_k) = \nabla\psi^*(\vz_{k-1} - a_k \nabla f(\vx_k))$. Recall that $E_k^{\eta} = a_k\innp{\veta_k, \vx_* - \vv_k}$. Adding and subtracting $\vvh_{k}$:
\begin{equation*}
E_k^{\eta} = a_k\innp{\veta_k, \vx_* - \vvh_{k}} + a_k\innp{\veta_k, \vvh_{k}-\vv_k}.
\end{equation*}
As $\vvh_{k}$ is independent of $\veta_k$ and $\mathbb{E}[\veta_k] = \zeros$, $\mathbb{E}\left[\innp{\veta_k, \vx_* - \vvh_{k}}\right] = 0$. On the other hand, as $\vv_k = \nabla \psi^*(\vz_k)$ and $\psi^*(\cdot)$ is $\frac{1}{\mu}$-smooth by Fact~\ref{fact:smoothness-of-cvx-conj}, we have: 
$
\innp{\veta_k, \vvh_{k}-\vv_k} \leq \frac{a_k}{\mu}\|\veta_k\|_*^2.$ 
The rest of the proof is by Theorem~\ref{thm:noiseless-convergence} and Lemma~\ref{lemma:total-error}. 
\end{proof}

\begin{remark}\label{remark:indep-relax}
It is possible to relax the assumption that $\veta_k$'s are independent. In fact, for Lemma~\ref{lemma:unconstr-noise-bnd} to apply, it suffices that, conditioned on the natural filtration $\mathcal{F}_{k-1}$ (all the information about the noise up to the beginning of iteration $k$), $\veta_k$ is independent of $\vvh_k$. More details are provided in Section \ref{app:gen-stochastic-models}. 
\end{remark}

Lemma~\ref{lemma:unconstr-noise-bnd} suggests that for unconstrained smooth minimization the sequence $a_k$ that leads to accelerated methods aggregates noise, as for accelerated methods $a_k \sim k, A_k \sim k^2$. However, if we were to resort to a slower, uniform sequence (and slower $1/k$ convergence rate), then the noise would average out, as we would have constant $a_k$'s and $A_k \sim k$. Even more, if $a_k\sim 1/\sqrt{k}$, then the error due to noise would decrease at rate $\log(k)/\sqrt{k}$. This is confirmed by our numerical experiments and matches the experience of practitioners, as discussed by \cite{Hardt-blog}.

The lemma also shows that error accumulation can be avoided if we postulate that the magnitude of the noise vanishes with the number of iterations. This can be achieved if the estimates of the gradient improve over iterations {\cite{atchade2014stochastic}}. For example, if we have $\mathbb{E}[\|\veta_i\|_*^2] = O\Big(\frac{1}{a_i}\Big) = O\Big(\frac{1}{i}\Big)$, the noise-error term averages out, making the total error due to noise bounded. If $\mathbb{E}[\|\veta_i\|_*^2] = O\Big(\frac{1}{{i}^2}\Big)$, the noise-error term vanishes at rate $k/A_k = 1/k$. Finally, if $\mathbb{E}[\|\veta_i\|_*^2] = O\Big(\frac{1}{{i}^3}\Big)$, the noise-error term vanishes at rate $\log(k)/k^2$, essentially recovering accelerated convergence. 

Observe that we could not get a bound on variance that is independent of $R_{\vx_*}$, as the variance (unlike the expectation) of $\innp{\veta_k, \vx_* - \vvh_{k}}$ is not zero. Instead, since we upper-bound the expectation of $f(\vy_k)-f(\vx_*)$ by a non-negative quantity (and $f(\vy_k)-f(\vx_*)$ is always non-negative as $\vx_*$ is the minimizer of $f(\cdot)$), we can apply Markov's Inequality to obtain a concentration bound on $f(\vy_k)-f(\vx_*)$. 

Finally, the step sizes $a_k$ can be chosen so as to balance the deterministic error and the error due to noise in the convergence bound. This leads to the following corollary. 

\begin{corollary}
\label{cor:convergent-bound}
Let $f:\mathcal{X}\rightarrow \mathbb{R}$ be an $L$-smooth function and let $\vx_0 \in \mathcal{X}$ be an arbitrary initial point. If sequences $\vx_k, \vy_k, \vz_k$ evolve according to (\ref{eq:agdp}) for some $\mu$-strongly convex function $\psi(\cdot)$, where $\veta_k$'s are zero-mean independent random variables, $K \geq 1$ is an arbitrary (but fixed) number of iterations of \ref{eq:agdp}, $\gamma = \mu/\max\{L, \sqrt{\sum_{i=1}^K {b_i}^2\mathbb{E}[\|\veta_i\|_*^2]}\}$, $a_i = \gamma b_i$, and $\frac{{a_i}^2}{A_i} \leq \gamma$, $\forall i$, then:
\begin{equation*}
\mathbb{E}\left[f(\vy_K) - f(\vx_*)\right] \leq \frac{D_{\psi}(\vx_*, \vx_0)}{A_K} + \frac{\sqrt{\sum_{i=1}^K {a_i}^2\mathbb{E}[\|\veta_i\|_*^2]}}{A_K}.
\end{equation*}
In particular, if $b_i = \frac{i+1}{2}$ and, in addition, $\mathbb{E}[\|\veta_i\|_*^2]\leq \sigma^2$, $\forall i$, then:
\begin{align*}
\mathbb{E}\left[f(\vy_K) - f(\vx_*)\right]\leq \frac{4L D_{\psi}(\vx_*, \vx_0)}{\mu K(K+3)} + O\left(\frac{\sigma(\mu + {D_{\psi}(\vx_*, \vx_0)})}{\mu\sqrt{K}}\right).
\end{align*}
If $\mathbb{E}[\|\veta_i\|_*^2]\leq \sigma^2$, $\forall i$, but the value of $\sigma$ is unknown, then setting $b_i = \frac{i+1}{2}$ and $\gamma = \mu/\max\{L, \sqrt{\sum_{i=1}^K {b_i}^2}\}$ gives:
\begin{align*}
\mathbb{E}\left[f(\vy_K) - f(\vx_*)\right]\leq  O\left(\frac{\sigma^2}{\sqrt{K}}\right)+\max\left\{\frac{4L D_{\psi}(\vx_*, \vx_0)}{\mu K(K+3)}, O\Big(\frac{D_{\psi}(\vx_*, \vx_0)}{\mu \sqrt{K}}\Big)\right\}.
\end{align*}
\end{corollary}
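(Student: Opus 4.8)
The plan is to derive the entire corollary from Lemma~\ref{lemma:unconstr-noise-bnd} via a step-size balancing argument; no new structural facts about \agdp~are needed. Introduce the shorthand $B_K = \sum_{i=1}^K b_i$ and $S_K = \sqrt{\sum_{i=1}^K b_i^2 \mathbb{E}[\|\veta_i\|_*^2]}$, so that the scaling $a_i = \gamma b_i$ yields $A_K = \gamma B_K$ and $\sqrt{\sum_{i=1}^K a_i^2 \mathbb{E}[\|\veta_i\|_*^2]} = \gamma S_K$, while the prescribed $\gamma = \mu/\max\{L, S_K\}$. The first thing to verify is that Lemma~\ref{lemma:unconstr-noise-bnd} actually applies: since $\max\{L, S_K\} \geq L$ we have $\gamma \leq \mu/L$, so the standing assumption $\frac{a_i^2}{A_i} \leq \gamma$ forces $\frac{a_i^2}{A_i} \leq \frac{\mu}{L}$, exactly the hypothesis of the lemma.

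With the lemma in force at $k = K$, the error is bounded by $\frac{D_{\psi}(\vx_*, \vx_0)}{A_K} + \frac{\sum_{i=1}^K a_i^2 \mathbb{E}[\|\veta_i\|_*^2]}{\mu A_K}$, and the one genuine step is to show the quadratic noise term collapses to the advertised square-root term. Since $\sum_{i=1}^K a_i^2 \mathbb{E}[\|\veta_i\|_*^2] = (\gamma S_K)^2$, the noise term equals $\frac{(\gamma S_K)^2}{\mu A_K} = \frac{\gamma S_K}{A_K}\cdot\frac{\gamma S_K}{\mu}$. The definition of $\gamma$ gives $\gamma S_K = \frac{\mu S_K}{\max\{L,S_K\}} \leq \mu$, so the second factor is at most $1$ and the noise term is at most $\frac{\gamma S_K}{A_K} = \frac{\sqrt{\sum_{i=1}^K a_i^2 \mathbb{E}[\|\veta_i\|_*^2]}}{A_K}$. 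This is precisely the first (general) inequality; it is the heart of the proof, as it is the balancing choice of $\gamma$ that converts the $\sum a_i^2$ dependence into a $\sqrt{\sum a_i^2}$ dependence.

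For the second statement I would substitute $b_i = \frac{i+1}{2}$, which gives the exact value $B_K = \frac{K(K+3)}{4}$ and $\sum_{i=1}^K b_i^2 = \Theta(K^3)$, hence $S_K \leq \sigma\sqrt{\sum_{i=1}^K b_i^2} = O(\sigma K^{3/2})$. Writing $\frac{1}{A_K} = \frac{\max\{L,S_K\}}{\mu B_K}$ and using $\max\{L,S_K\} \leq L + S_K$ splits the deterministic term $\frac{D_{\psi}(\vx_*,\vx_0)}{A_K}$ into the leading piece $\frac{4L D_{\psi}(\vx_*,\vx_0)}{\mu K(K+3)}$ plus a correction $\frac{4 S_K D_{\psi}(\vx_*,\vx_0)}{\mu K(K+3)} = O\!\big(\frac{\sigma D_{\psi}(\vx_*,\vx_0)}{\mu\sqrt{K}}\big)$, while the noise term is $\frac{S_K}{B_K} = O(\sigma/\sqrt{K})$. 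Absorbing the $O(\sigma/\sqrt K)$ as $\frac{\sigma\mu}{\mu\sqrt K}$ and combining with the correction produces the stated $O\!\big(\frac{\sigma(\mu + D_{\psi}(\vx_*,\vx_0))}{\mu\sqrt{K}}\big)$.

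For the unknown-$\sigma$ case, $\gamma$ is defined through $T_K = \sqrt{\sum_{i=1}^K b_i^2}$ rather than $S_K$, so the first inequality no longer applies verbatim and I would return to Lemma~\ref{lemma:unconstr-noise-bnd} directly. Bounding $\sum_{i=1}^K a_i^2\mathbb{E}[\|\veta_i\|_*^2] \leq \gamma^2\sigma^2 T_K^2$, the noise term becomes at most $\frac{\gamma\sigma^2 T_K^2}{\mu B_K} = \frac{\sigma^2 T_K^2}{\max\{L,T_K\}B_K} \leq \frac{\sigma^2 T_K}{B_K} = O(\sigma^2/\sqrt{K})$, using $T_K \leq \max\{L,T_K\}$ and $T_K = O(K^{3/2})$; the deterministic term equals $\frac{4\max\{L,T_K\}D_{\psi}(\vx_*,\vx_0)}{\mu K(K+3)} = \max\big\{\frac{4L D_{\psi}(\vx_*,\vx_0)}{\mu K(K+3)},\, O(\frac{D_{\psi}(\vx_*,\vx_0)}{\mu\sqrt{K}})\big\}$. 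The work here is almost entirely bookkeeping; the main obstacle is keeping the asymptotic accounting honest, in particular using the splits $\max\{L,S_K\}\leq L+S_K$ and $\max\{L,T_K\}\leq L+T_K$ to cleanly separate the acceleration-rate leading term from the noise-induced $1/\sqrt{K}$ corrections, and getting the exact constants $B_K = \frac{K(K+3)}{4}$ and $\sum_{i=1}^K b_i^2 = \Theta(K^3)$ right.
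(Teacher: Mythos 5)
Your proposal is correct and follows essentially the same route as the paper's (very terse) proof: the key step in both is that the choice of $\gamma$ gives $\sqrt{\sum_{i=1}^K a_i^2\mathbb{E}[\|\veta_i\|_*^2]} = \gamma S_K \leq \mu$, which collapses the quadratic noise term of Lemma~\ref{lemma:unconstr-noise-bnd} to the square-root term, and the asymptotic statements follow from the split $\max\{L,S_K\}\leq L+S_K$ (equivalently the paper's $\mu \leq \gamma L + \sqrt{\sum a_i^2\mathbb{E}[\|\veta_i\|_*^2]}$) together with $B_K = K(K+3)/4$. Your write-up simply makes explicit the bookkeeping the paper leaves to the reader.
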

\begin{proof}
By the choice of parameters, 
\begin{align*}
\mu = \gamma \max\Big\{L, \sqrt{\littlesum_{i=1}^K {b_i}^2\mathbb{E}[\|\veta_i\|_*^2]}\Big\}\geq \gamma\sqrt{\littlesum_{i=1}^K {b_i}^2\mathbb{E}[\|\veta_i\|_*^2]}= \sqrt{\littlesum_{i=1}^K {a_i}^2\mathbb{E}[\|\veta_i\|_*^2]},
\end{align*}
which bounds the stochastic term. 
The rest of the proof follows by plugging in particular choices of parameters and using that $\gamma L\leq \mu \leq \max\left\{\gamma L, \sqrt{\littlesum_{i=1}^K {a_i}^2\mathbb{E}[\|\veta_i\|_*^2]}\right\}\leq \gamma L + \sqrt{\littlesum_{i=1}^K {a_i}^2\mathbb{E}[\|\veta_i\|_*^2]}$.
\end{proof}
\begin{remark}\label{remark:convergent-bound}
Observe that the optimal choice of $\gamma$ to balance the terms from Lemma~\ref{lemma:unconstr-noise-bnd} would be $\gamma = \mu/ \max\left\{L, \sqrt{\frac{\sum_{i=1}^K {a_i}^2\mathbb{E}[\|\veta_i\|_*^2]}{D_{\phi}(\vx_*, \vx_0)}}\right\}$. When $\mathcal{X}$ is unconstrained, it is not always possible to estimate (an upper bound on) $D_{\phi}(\vx_*, \vx_0)$, which is why we made the particular choice of $\mu$ in Corollary~\ref{cor:convergent-bound}. However, when the diameter $\Omega_{\mathcal{X}}$ of $\mathcal{X}$ is bounded, we can choose $\gamma = \mu/\max\left\{L, \sqrt{\frac{\sum_{i=1}^K {a_i}^2\mathbb{E}[\|\veta_i\|_*^2]}{\Omega_{\mathcal{X}}}}\right\}$, leading to the same (optimal) asymptotic bound as in~\cite{lan2012optimal}. Finally, for bounded-diameter region~\cite{ghadimi2012optimal} provides a step-size policy that can relax the assumption that $K$ is fixed in advance. We expect that a similar policy should also apply in the case of \agdp. The details are omitted. 
\end{remark}

\section{Noise-Error Reduction}\label{sec:noise-reduction}

Based on the results of our analysis from Section~\ref{sec:algorithm}, we now discuss how these results can be used to control the error of \agdp~that is incurred due to the gradient oracle noise. First, we discuss how to prevent error accumulation from Lemma~\ref{lemma:unconstr-noise-bnd}, which is incurred when running a vanilla version of \agdp. The main idea is to take advantage of acceleration until the noise accumulation starts dominating the convergence, and then switch to a slower sequence $\{a_k\}$ for which the error averages out and the algorithm further reduces the mean. Finally, we show how, through another algorithm restart and slow down, the sequence of updates can be made convergent (i.e., the mean error is further reduced at a rate $\sim 1/\sqrt{k}$). 

Observe that the result from Corollary~\ref{cor:convergent-bound} already gives a convergent sequence of updates. However, the choice of parameters in Corollary~\ref{cor:convergent-bound} is fixed and tailored to the global problem properties and worst-case effect of the additive noise. Instead, the strategy of incrementally slowing down the algorithm can take advantage of the more local, fine-grained properties of the objective function. This is confirmed by the numerical experiments provided in Section~\ref{sec:experiments}. 

\subsection{Mean-Error Stabilization}
To take advantage of acceleration at the initial stage and then stabilize the mean error due to noise, we propose the following \rsd~semi-heuristics:


\noindent\vsepfbox{
\begin{minipage}{.98\textwidth}\rsd:  If $\|{\vz_k}\|_2^2 \leq \sum_{i=1}^k {{a_i}^2\mathbb{E}\left[\|\eta_i\|_2^2\right]}$, restart the algorithm taking $\vy_k$ as the initial point and slow down the sequence $\{a_i\}$ to $a_i = \frac{\mu}{L}$, $\forall i \geq 1$. 
\end{minipage}}

The only ``heuristic'' part of \rsd~is deciding when to switch to the slower sequence, as, due to Lemma~\ref{lemma:unconstr-noise-bnd}, slower sequence is guaranteed to lead to a better bound on the approximation error due to noise in the case of unconstrained minimization. Further, switching to a slower, linearly growing sequence $A_k$ is guaranteed to further reduce the error mean, as discussed in the next subsection. 

The intuition behind \rsd~criterion is restarting when ``the signal is drowning in noise''. In particular, $\vz_k$ (the weighted {sum} of the noisy negative gradients) is the only gradient information used in defining all steps of \agdp~and we can interpret it as the ``signal'' that is used to guide algorithm updates. When the gradients are corrupted by noise, ${\vz_k} = - {\sum_{i=1}^k a_i \nabla f(\vx_i)} - {\sum_{i=1}^k a_i \veta_i}$. As the noise is assumed to be independent, the expected energy of the signal-plus-noise is equal to the sum of the energy of the signal and the expected energy of the noise:
\begin{align*}
\left\|{\vz_k}\right\|_2^2 &= \mathbb{E}\left[\left\|{\littlesum_{i=1}^k a_i \nabla f(\vx_i)}\right\|_2^2\right] + \mathbb{E}\left[ \left\|{\littlesum_{i=1}^k a_i \veta_i}\right\|_2^2 \right]\\
&= \left\|{\littlesum_{i=1}^k a_i \nabla f(\vx_i)}\right\|_2^2 + \littlesum_{i=1}^k {{a_i}^2\mathbb{E}\left[\|\eta_i\|_2^2\right]}.
\end{align*}
Hence, when the criterion of \rsd~is satisfied, the energy component due to noise dominates the energy component of the signal in $\vz_k$.

For constrained minimization with a small diameter, \rsd~cannot reduce the theoretical mean of the error due to noise (unless the bound from Lemma~\ref{lemma:unconstr-noise-bnd} dominates the bound from Proposition~\ref{prop:noise-bnd-diam}), as the noise term averages out regardless of the sequence $\{a_i\}$ (Proposition~\ref{prop:noise-bnd-diam}). Nevertheless, a slower, uniform sequence $\{a_i\}$ has lower variance than the accelerated sequence, and can be beneficial in the settings where the accelerated sequence produces high error variance. 

\subsection{Further Mean-Error Reduction}

Quadratically-growing sequence $\{A_i\}$ (or linearly growing sequence $\{a_i\}$) is the fastest-growing sequence which guarantees that $A_k G_k$ is non-increasing in the case of smooth minimization with exact gradients. When we switch to a slower sequence $\{A_i\}$ by invoking \rsd, this creates more slack in making $A_k G_k$ non-increasing in the presence of gradient noise. Hence, \rsd~reduces the mean error and keeps it bounded. However, with \rsd~alone, the mean error cannot converge to zero. To ensure that the error is converging to zero, we can perform an additional \rsd~(\rsd-2), which uses the same criterion for restart, but slows down the sequence $a_k$ to $a_k \sim 1/\sqrt{k}$, as follows.  

\noindent\vsepfbox{
\begin{minipage}{.98\textwidth}\rsd-2:  
If $\|{\vz_k}\|_2^2 \leq \sum_{i=1}^k {{a_i}^2\mathbb{E}\left[\|\eta_i\|_2^2\right]}$, 
restart the algorithm taking $\vy_k$ as the initial point 
and slow down the sequence 
$\{a_i\}$ to 
$a_i = {\mu}/({L}\sqrt{i})$, $\forall i \geq 1$. 
\end{minipage}}

Thus, we have the following Corollary (of Lemma~\ref{lemma:unconstr-noise-bnd}):
\begin{corollary}\label{cor:agdp-convergent}
Let $f:\mathcal{X}\rightarrow \mathbb{R}$ be an $L$-smooth function and let $\vx_0 \in \mathcal{X}$ be an arbitrary initial point. If sequences $\vx_k, \vy_k, \vz_k$ evolve according to (\ref{eq:agdp}) for some $\mu$-strongly convex function $\psi(\cdot)$, where $\veta_k$'s are zero-mean i.i.d. random variables and ${a_k} = \frac{\mu}{L\sqrt{k}}$, then $\forall k \geq 1$:
\begin{equation*}
\begin{aligned}
\mathbb{E}\left[f(\vy_k) - f(\vx_*)\right] \leq \frac{L D_{\psi}(\vx_*, \vx_0)}{\mu \sqrt{k}} + \frac{\log(k+1) \mathbb{E}[\|\veta_1\|_*^2]}{L\sqrt{k}}.
\end{aligned}
\end{equation*}
\end{corollary}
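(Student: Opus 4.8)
The plan is to read this off Lemma~\ref{lemma:unconstr-noise-bnd} by substituting the slowed-down step size $a_k=\frac{\mu}{L\sqrt{k}}$ and estimating the two sums that appear in its conclusion. First I would check admissibility, i.e.\ that the standing hypothesis $\frac{a_k^2}{A_k}\le\frac{\mu}{L}$ of the lemma holds. Since $A_k=\sum_{i=1}^k a_i$ is a sum of positive terms with $a_1=\mu/L$, we have $A_k\ge \mu/L\ge \frac{\mu}{Lk}=\frac{L}{\mu}a_k^2$ for every $k\ge 1$, so the hypothesis holds. The i.i.d.\ assumption supplies exactly what the lemma needs (zero-mean and independent), so Lemma~\ref{lemma:unconstr-noise-bnd} applies verbatim and gives
\[
\mathbb{E}[f(\vy_k)-f(\vx_*)]\le \frac{D_\psi(\vx_*,\vx_0)}{A_k}+\frac{\sum_{i=1}^k a_i^2\,\mathbb{E}[\|\veta_i\|_*^2]}{\mu A_k}.
\]
Everything after this is deterministic arithmetic on $a_k=\frac{\mu}{L\sqrt{k}}$.

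I would then handle the two terms separately, both relying on the single lower bound $A_k\ge \frac{\mu}{L}\sqrt{k}$, which follows from $\frac{1}{\sqrt{i}}\ge \frac{1}{\sqrt{k}}$ for $i\le k$ and hence $A_k=\frac{\mu}{L}\sum_{i=1}^k \frac{1}{\sqrt{i}}\ge \frac{\mu}{L}\cdot k\cdot\frac{1}{\sqrt{k}}=\frac{\mu}{L}\sqrt{k}$. For the deterministic term this immediately yields $\frac{D_\psi(\vx_*,\vx_0)}{A_k}\le \frac{L D_\psi(\vx_*,\vx_0)}{\mu\sqrt{k}}$, the first summand of the claim. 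For the stochastic term I would use that the $\veta_i$ are identically distributed, so $\mathbb{E}[\|\veta_i\|_*^2]=\mathbb{E}[\|\veta_1\|_*^2]$ and $\sum_{i=1}^k a_i^2\mathbb{E}[\|\veta_i\|_*^2]=\frac{\mu^2}{L^2}\mathbb{E}[\|\veta_1\|_*^2]\sum_{i=1}^k \frac{1}{i}$. Dividing by $\mu A_k$ and using $A_k\ge \frac{\mu}{L}\sqrt{k}$ together with the harmonic-sum bound $\sum_{i=1}^k \frac{1}{i}=O(\log(k+1))$ collapses this to $\frac{\mathbb{E}[\|\veta_1\|_*^2]\sum_{i=1}^k 1/i}{L\sqrt{k}}=O\big(\frac{\log(k+1)\,\mathbb{E}[\|\veta_1\|_*^2]}{L\sqrt{k}}\big)$. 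Adding the two estimates gives the stated bound.

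I expect no genuine obstacle: all the probabilistic content — the vanishing of the zero-mean cross term $\mathbb{E}[\innp{\veta_k,\vx_*-\vvh_k}]=0$ and the smoothness estimate $\innp{\veta_k,\vvh_k-\vv_k}\le \frac{a_k}{\mu}\|\veta_k\|_*^2$ — is already packaged inside Lemma~\ref{lemma:unconstr-noise-bnd}, so the corollary is pure bookkeeping. The only steps needing care are (i) not skipping the admissibility check $a_k^2/A_k\le \mu/L$, since the lemma assumes it, and (ii) the harmonic-sum constant: the standard bound is $\sum_{i=1}^k 1/i\le 1+\ln k$, whereas the statement writes the slightly looser $\log(k+1)$; since $\sum_{i=1}^k 1/i=\Theta(\log k)$, both give the same rate, so I would not belabor the exact constant. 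The substantive point, rather than the calculation, is the conclusion it encodes: with the $1/\sqrt{k}$ step size the deterministic error decays like $1/\sqrt{k}$ and the noise error like $\log(k)/\sqrt{k}$, so both vanish and \agdp~converges even in the unconstrained setting where Proposition~\ref{prop:noise-bnd-diam} is vacuous.
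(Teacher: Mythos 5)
Your proposal is correct and matches the paper's intended derivation: the paper states this corollary without a separate proof, presenting it as an immediate consequence of Lemma~\ref{lemma:unconstr-noise-bnd} with $a_k=\mu/(L\sqrt{k})$, and your admissibility check, the bound $A_k\ge\frac{\mu}{L}\sqrt{k}$, and the harmonic-sum estimate are exactly the bookkeeping that is being elided. The only caveat, which you already flag, is the harmonic-sum constant ($H_k\le 1+\ln k$ versus the stated $\log(k+1)$), a looseness present in the paper's statement itself and immaterial to the rate.
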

Finally, observe that the factor of $\log(k+1)$ in the bound from Corollary~\ref{cor:agdp-convergent} can be removed if the number of steps $K$ is fixed in advance and $a_k$'s are set to $a_k = \frac{\mu}{L \sqrt{K}}$.

 \section{Different Models of Inexact Oracle}\label{sec:models}

\subsection{Adversarial Models}\label{app:adversarial-models}
There are two main adversarial models of inexact gradient oracles that have been used in the convergence analysis of accelerated methods: the approximate gradient model of~\cite{d2008smooth} and inexact first-order oracle of~\cite{devolder2014first}. The approximate gradient model~\cite{d2008smooth} defines the inexact  oracle by a deterministic perturbation satisfying the following condition for all queries:
\begin{equation*}
\left|\innp{\veta, \vy - \vz}\right| \leq \delta, \; \forall \vy, \vz \in \mathcal{X}.
\end{equation*}
Hence, this model is only applicable to constrained optimization with bounded-diameter domain and bounded (adversarial) additive noise. 
Under these assumptions, \cite{d2008smooth} proves that it is possible to approximate $f(\vx_*)$ up to an error of $\delta$ achieving an accelerated rate.
We can show the same asymptotic bound\footnote{We actually obtain better constants than those in Theorem 2.2 of \cite{d2008smooth}.} by applying the assumption to Equation (\ref{eq:accumulate}) in Proposition~\ref{prop:noise-bnd-diam}. This yields:
$$
G_k \leq  \frac{D_{\psi}(\vx_*, \vx_0)}{A_k} + \delta
$$
whenever $\frac{{a_k}^2}{A_k} \leq \frac{\mu}{L}$ for all $k$. Setting $a_k = \frac{\mu}{L} \cdot \frac{k+1}{2}$ yields $A_k = k^2 + O(k) $, which establishes the accelerated decrease of the first term in the error bound above.

The inexact first-order oracle~\cite{devolder2014first} is a generalization of the model from~\cite{d2008smooth} that defines the inexact oracle by:
\begin{equation*}
f(\vy) \leq f(\vx) + \innp{\nabla f(\vx), \vy - \vx} + \frac{L}{2}\|\vy - \vx\|^2 + \delta.
\end{equation*}
As stated in~\cite{devolder2014first}, this model does not apply to noise-corrupted gradients \emph{per se}, but rather to ``non-smooth and weakly smooth convex problems''. In other words, the model was introduced to characterize the behavior of accelerated methods on objective functions that are non-smooth, but close to smooth. 
Our results agree with those of~\cite{devolder2014first} and lead to the same kind of error accumulation. To see this, observe that we only use the definition of smoothness when bounding $E_k^e$ in Theorem~\ref{thm:noiseless-convergence}. Thus, the error from the inexact oracle would only appear as $E = E_k^e \leq A_k \delta$, leading to:
\begin{equation*}
f(\vy_k) - f(\vx_*) \leq \frac{D_{\psi}(\vx_*, \vx_0)}{A_k} + \frac{\sum_{i=1}^k A_i}{A_k} \cdot \delta,
\end{equation*}
This is exactly the same bound as in Theorem 4 of \cite{devolder2014first}, but we obtain it through a generic algorithm with a simple analysis.

\subsection{Generalized Stochastic Models}\label{app:gen-stochastic-models}

It is possible to generalize the results from Lemma~\ref{lemma:unconstr-noise-bnd} to the noise model from~\cite{lan2012optimal,ghadimi2012optimal}. In such a model, $\veta_k = G(\vx_k, \vxi_k) - \nabla f(\vx_k)$, where $\{\vxi_i\}_{i=1}^k$'s are i.i.d. random vectors, $\mathbb{E}[\veta_k] = \zeros$ and $\mathbb{E}[\|\veta_k\|_*^2]\leq \sigma^2$. Let $\mathcal{F}_k$ denote the natural filtration up to (and including) iteration $k$. Then $\vvh_k = \nabla\psi^*(\vz_k + a_k\veta_k)$ is measurable w.r.t. $\mathcal{F}_{k-1}$ (as $\{\vx_i\}_{i=1}^k$ and $\{\vxi_i\}_{i=1}^{k-1}$ are measurable w.r.t. $\mathcal{F}_{k-1}$). It follows that:
\begin{align}
\mathbb{E}[E_k^{\eta}|\mathcal{F}_{k-1}] &= a_k \mathbb{E}[\innp{\veta_k, \vx_* - \vv_k}|\mathcal{F}_{k-1}] = a_k \mathbb{E}[\innp{\veta_k, \vx_* - \vvh_k}|\mathcal{F}_{k-1}] + a_k \mathbb{E}[\innp{\veta_k, \vvh_k - \vv_k}|\mathcal{F}_{k-1}]\notag\\
&\leq \frac{{a_k}^2}{\mu}\mathbb{E}[\|\veta_k\|_*^2] \leq  \frac{{a_k}^2\sigma^2}{\mu}.\label{eq:cond-noise-error}
\end{align}
Define $\Gamma_k \defeq A_k G_k - \sum_{i=1}^k \frac{{a_k}^2\sigma^2}{\mu}$. Then, using the results from Section~\ref{sec:algorithm} and~\eqref{eq:cond-noise-error}:
\begin{align*}
\mathbb{E}[\Gamma_k - \Gamma_{k-1}|\mathcal{F}_{k-1}] = \mathbb{E}\left[A_kG_k - A_{k-1}G_{k-1} - \frac{{a_k}^2\sigma^2}{\mu}|\mathcal{F}_{k-1}\right] = \mathbb{E}\left[E_k^{\eta} - \frac{{a_k}^2\sigma^2}{\mu}|\mathcal{F}_{k-1}\right]\leq 0,
\end{align*}
i.e., $\Gamma_k$ is a supermartingale. Hence, we can conclude that $\mathbb{E}[\Gamma_k]\leq \mathbb{E}[\Gamma_1],$ implying $\mathbb{E}[G_k] \leq \frac{A_1}{A_k}\mathbb{E}[G_1] + \frac{\sum_{i=2}^k {{a_i}^2\sigma^2} }{\mu A_k}$, and we recover the same bound as in Lemma~\ref{lemma:unconstr-noise-bnd}:
\begin{equation*}
\mathbb{E}[f(\vy_k)] - f(\vx_*) \leq \frac{D_{\psi}(\vx_*, \vx_0)}{A_k} + \frac{\sum_{i=1}^k {{a_i}^2\sigma^2} }{\mu A_k}.
\end{equation*}

\section{\agdp~for Smooth and Strongly Convex Minimization}\label{app:ssc-magdp}

Here we show that~\agdp~can be extended to the setting of smooth and strongly convex minimization. As is customary~\cite{nesterov2013introductory}, in this setting we assume that $\|\cdot\|=\|\cdot\|_2$ so that $f(\cdot)$ is $L$-smooth and $\mu$-strongly convex w.r.t. the $\ell_2$ norm, for $L < \infty$ and $\mu > 0$. To distinguish from the non-strongly-convex case, we refer to \agdp~for smooth and strongly convex minimization as \magdp.


To analyze~\agdp~in this setting, we need to use a stronger lower bound $L_k$, which is constructed by the same arguments as before, but now using strong convexity instead of regular convexity. Such a construction gives:
\begin{equation}\label{eq:smooth-sc-lb}
\begin{aligned}
L_k =& \frac{\sum_{i=1}^k a_i f(\vx_i) + \min_{\vu \in \mathcal{X}}m_k(\vu) - \sum_{i=1}^k a_i\innp{\veta_i, \vx_* - \vx_i} - D_{\psi}(\vx_*, \vx_0)}{A_k},
\end{aligned}
\end{equation}
where
\begin{equation*}
\begin{aligned}
m_k(\vu) =& \frac{\sum_{i=1}^k a_i \left(\innp{\nabla f(\vx_i), \vu - \vx_i} + \frac{\mu}{2}\|\vu - \vx_i\|^2\right) + D_{\psi}(\vu, \vx_0)}{A_k}.
\end{aligned}
\end{equation*}
While it suffices to have $\psi$ be an arbitrary function that is strongly convex w.r.t. the $\|\cdot\|_2$, for simplicity, we take $\psi(\vx) = \frac{\mu_0}{2}\|\vx\|^2$, where $\mu_0$ will be specified later.

For $\theta_k = \frac{a_k}{A_k}$, the algorithm can now be stated as follows:
\begin{equation}\label{eq:magdp}\tag{\magdp}
\begin{gathered}
\vv_k = \argmin_{\vu \in \mathcal{X}}m_k(\vu),\\
\vx_k = \frac{1}{1+\theta_k}\vy_{k-1} + \frac{\theta_k}{1+\theta_k}\vv_{k-1},\\
\vy_{k} = (1-\theta_k)\vy_{k-1} + \theta_k \vv_k,
\end{gathered}
\end{equation}
where, $\vx_1 = \vx_0 = \vy_0 = \vv_0$ is an arbitrary initial point from $\mathcal{X}$.

As before, the main convergence argument is to show that $A_k G_k \leq A_{k-1}G_{k-1}$ and combine it with the bound on the initial gap $G_1$. We start with bounding the initial gap, as follows.

\begin{proposition}\label{prop:init-gap-ssc}
If $\psi(\vx) = \frac{\mu_0}{2}\|\vx\|^2$, where $\mu_0 = a_1(L-\mu)$, then $A_1G_1 \leq \frac{A_1(L-\mu)}{2}\|\vx_* - \vx_0\|^2 + E_1^{\eta}$, where $E_1^{\eta} = a_1 \innp{\veta_1, \vx_* - \vv_1}$.
\end{proposition}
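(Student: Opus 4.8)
The plan is to evaluate the approximate gap $A_1 G_1 = A_1 U_1 - A_1 L_1$ directly at the first step $k=1$, exactly as in the initial-gap computation behind Theorem~\ref{thm:noiseless-convergence}, and then exploit the specific prox $\psi(\vx) = \frac{\mu_0}{2}\|\vx\|^2$ with $\mu_0 = a_1(L-\mu)$ to cancel the quadratic terms. First I would record the two simplifications available at the first step: since $A_1 = a_1$ we have $\theta_1 = a_1/A_1 = 1$, so the update $\vy_1 = (1-\theta_1)\vy_0 + \theta_1 \vv_1$ collapses to $\vy_1 = \vv_1$ and hence $U_1 = f(\vv_1)$; and since $\vx_1 = \vx_0$, all divergences $D_\psi(\cdot,\vx_0)$ are measured from $\vx_1$ as well.

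Next I would write out the lower bound at $k=1$ from~\eqref{eq:smooth-sc-lb}. Multiplying through by $A_1$ and recalling (see~\eqref{eq:magdp}) that $\vv_1$ is the minimizer of $m_1$, this gives $A_1 L_1 = a_1 f(\vx_1) + M_1(\vv_1) - a_1\innp{\veta_1, \vx_* - \vx_1} - D_\psi(\vx_*, \vx_0)$, where $M_1(\vv_1) = a_1\big(\innp{\tnabla f(\vx_1), \vv_1 - \vx_1} + \frac{\mu}{2}\|\vv_1 - \vx_1\|^2\big) + D_\psi(\vv_1, \vx_0)$ is the (unnormalized) minimized objective. Subtracting this from $A_1 U_1 = a_1 f(\vv_1)$ yields an exact expression for $A_1 G_1$ carrying the gradient term $-a_1\innp{\tnabla f(\vx_1), \vv_1 - \vx_1}$ and the noise term $+a_1\innp{\veta_1, \vx_* - \vx_1}$.

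The key manipulation is then to substitute $\tnabla f(\vx_1) = \nabla f(\vx_1) + \veta_1$ and to split the remaining noise inner product along $\vx_* - \vx_1 = (\vx_* - \vv_1) + (\vv_1 - \vx_1)$. This produces $E_1^{\eta} = a_1\innp{\veta_1, \vx_* - \vv_1}$ together with a stray term $a_1\innp{\veta_1, \vv_1 - \vx_1}$, which exactly cancels the $-a_1\innp{\veta_1, \vv_1 - \vx_1}$ coming from replacing $\tnabla f$ by $\nabla f + \veta_1$. What remains is $a_1\big(f(\vv_1) - f(\vx_1) - \innp{\nabla f(\vx_1), \vv_1 - \vx_1}\big) - a_1\frac{\mu}{2}\|\vv_1 - \vx_1\|^2 - D_\psi(\vv_1, \vx_0) + D_\psi(\vx_*, \vx_0) + E_1^{\eta}$. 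Applying $L$-smoothness (Definition~\ref{def:smoothness}) to the first bracket bounds it by $a_1\frac{L}{2}\|\vv_1 - \vx_1\|^2$, so the two quadratic-in-$\vv_1$ terms combine into $a_1\frac{L-\mu}{2}\|\vv_1 - \vx_1\|^2$.

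Finally I would plug in $\psi(\vx) = \frac{\mu_0}{2}\|\vx\|^2$, for which $D_\psi(\vu, \vx_0) = \frac{\mu_0}{2}\|\vu - \vx_0\|^2$, together with $\vx_1 = \vx_0$. With $\mu_0 = a_1(L-\mu)$ the term $a_1\frac{L-\mu}{2}\|\vv_1 - \vx_0\|^2$ exactly cancels $-D_\psi(\vv_1, \vx_0)$, leaving $D_\psi(\vx_*, \vx_0) + E_1^{\eta} = \frac{a_1(L-\mu)}{2}\|\vx_* - \vx_0\|^2 + E_1^{\eta}$, which is the claim (recall $A_1 = a_1$). The argument is thus a direct, bookkeeping-heavy computation; the step I expect to require the most care is tracking the noise so that the $\innp{\veta_1, \vv_1 - \vx_1}$ contributions cancel, which crucially relies on $m_k$ being built from the \emph{inexact} gradients $\tnabla f$ rather than the true gradients, and the choice $\mu_0 = a_1(L-\mu)$ is precisely what is needed to annihilate the leftover $\|\vv_1 - \vx_0\|^2$ term.
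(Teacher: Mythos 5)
Your proof is correct and follows essentially the same route as the paper's: evaluate $A_1U_1=a_1f(\vv_1)$ and $A_1L_1$ via $m_1(\vv_1)$, use smoothness to bound $f(\vv_1)-f(\vx_1)-\innp{\nabla f(\vx_1),\vv_1-\vx_1}$, track the noise so that the $\innp{\veta_1,\vv_1-\vx_1}$ contributions cancel and only $E_1^\eta=a_1\innp{\veta_1,\vx_*-\vv_1}$ survives, and observe that $\mu_0=a_1(L-\mu)$ annihilates the $\|\vv_1-\vx_1\|^2$ coefficient. The only cosmetic difference is that you perform the $\tnabla f=\nabla f+\veta_1$ substitution in the lower bound while the paper does the equivalent rewriting in the upper bound; the bookkeeping and cancellations are identical.
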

\begin{proof}
As $\vx_1 = \vx_0$, the initial lower bound is:
\begin{align*}
A_1 L_1 =& a_1 f(\vx_1) + a_1 \innp{\nabla f(\vx_1), \vv_1 -\vx_1}+ \left(\frac{a_1\mu}{2} + \frac{\mu_0}{2}\right)\|\vv_1 - \vx_1\|^2 - \frac{\mu_0}{2}\|\vx_* - \vx_0\|^2 - a_1 \innp{\veta_1, \vx_* - \vx_1}.
\end{align*}
As $a_1 = A_1$, it follows that $\vy_1 = \vv_1$, and hence:
\begin{align*}
A_1 U_1 =& a_1 f(\vv_1)\\
\leq & a_1f(\vx_1) + a_1 \innp{\nabla f(\vx_1), \vv_1 - \vx_1}+ \frac{a_1L}{2}\|\vv_1 - \vx_1\|^2\\
=& a_1f(\vx_1) + a_1 \innp{\tnabla f(\vx_1), \vv_1 - \vx_1}+ \frac{a_1L}{2}\|\vv_1 - \vx_1\|^2 - a_1 \innp{\veta_1, \vv_1 - \vx_1},
\end{align*}
where the inequality is by the smoothness of $f(\cdot)$. 
Combining the bounds on the initial upper and lower bounds, it follows:
\begin{align*}
A_1G_1 &\leq \frac{\mu_0}{2}\|\vx_* - \vx_0\|^2 + \frac{a_1L - a_1\mu - \mu_0}{2}\|\vv_1 - \vx_1\|^2 + a_1 \innp{\veta_1, \vx_* - \vv_1}\\
&\leq \frac{a_1(L-\mu)}{2}\|\vx_* - \vx_0\|^2 + E_1^{\eta},
\end{align*}
as, by the initial assumption, $\mu_0 = a_1(\mu - L)$ and $E_1^{\eta} = a_1 \innp{\veta_1, \vx_* - \vv_1}$.
\end{proof}

To bound the change in the lower bound, it is useful to first bound $m_k(\vv_k) - m_{k-1}(\vv_{k-1})$, as in the following technical proposition.

\begin{proposition}\label{prop:change-in-min-ssc}
Let $\psi(\vx) = \frac{a_1(L-\mu)}{2}\|\vx\|^2$. Then: 
\begin{equation*}
\begin{aligned}
m_{k}(\vv_k)\geq & m_{k-1}(\vv_{k-1}) + a_k \innp{\tnabla f(\vx_k), \vv_k - \vx_k} + \frac{A_{k-1}\mu}{2}\|\vv_k - \vv_{k-1}\|^2
+ \frac{a_k \mu}{2}\|\vv_k - \vx_k\|^2.
\end{aligned}
\end{equation*}
\end{proposition}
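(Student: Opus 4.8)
The plan is to mirror the structure of the proof of Lemma~\ref{lemma:total-error}, but now carrying the extra strong-convexity terms that appear inside $m_k$. Write $m_k(\vu) = m_{k-1}(\vu) + \frac{a_k}{A_k}\big(\innp{\nabla f(\vx_k), \vu - \vx_k} + \frac{\mu}{2}\|\vu - \vx_k\|^2\big)$ (up to the normalizing conventions used in~\eqref{eq:smooth-sc-lb}), so that the incremental function added at step $k$ is not merely linear, as it was in the non-strongly-convex case, but carries a quadratic penalty $\frac{\mu}{2}\|\vu - \vx_k\|^2$. The key structural observation is that $m_{k-1}(\vu)$ is itself strongly convex: it is the sum of $D_\psi(\vu, \vx_0)$ (with $\psi = \frac{a_1(L-\mu)}{2}\|\cdot\|^2$, hence $a_1(L-\mu)$-strongly convex) and the accumulated quadratic penalties $\sum_{i=1}^{k-1} a_i \frac{\mu}{2}\|\vu - \vx_i\|^2$. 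A short bookkeeping computation shows that the total strong-convexity modulus of $m_{k-1}$ is exactly $a_1(L-\mu) + \mu\sum_{i=1}^{k-1}a_i = a_1(L-\mu) + \mu A_{k-1}$; I would record this as the first step.

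Next I would expand $m_{k-1}(\vv_k)$ around its minimizer $\vv_{k-1}$ using the Bregman identity, exactly as in Lemma~\ref{lemma:total-error}:
\begin{align*}
m_{k-1}(\vv_k) = m_{k-1}(\vv_{k-1}) + \innp{\nabla m_{k-1}(\vv_{k-1}), \vv_k - \vv_{k-1}} + D_{m_{k-1}}(\vv_k, \vv_{k-1}).
\end{align*}
The first-order optimality of $\vv_{k-1} = \argmin m_{k-1}$ over $\mathcal{X}$ gives $\innp{\nabla m_{k-1}(\vv_{k-1}), \vv_k - \vv_{k-1}} \geq 0$, and strong convexity of $m_{k-1}$ with the modulus computed above yields $D_{m_{k-1}}(\vv_k, \vv_{k-1}) \geq \frac{a_1(L-\mu) + \mu A_{k-1}}{2}\|\vv_k - \vv_{k-1}\|^2$. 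This is the point where the strongly convex case genuinely diverges from the earlier lemma: instead of a bare $D_\psi(\vv_k,\vv_{k-1})$ term, the accumulated curvature contributes the extra $\frac{\mu A_{k-1}}{2}\|\vv_k-\vv_{k-1}\|^2$ that appears in the target inequality.

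Then I would add in the step-$k$ increment $a_k\big(\innp{\nabla f(\vx_k), \vv_k - \vx_k} + \frac{\mu}{2}\|\vv_k - \vx_k\|^2\big)$ to pass from $m_{k-1}(\vv_k)$ to $m_k(\vv_k)$, convert $\nabla f(\vx_k)$ into $\tnabla f(\vx_k)$ via $\nabla f(\vx_k) = \tnabla f(\vx_k) - \veta_k$ (so the noise bookkeeping is consistent with~\eqref{eq:smooth-sc-lb}), and collect the quadratic terms. The remaining obstacle, and the only delicate part, is accounting: I must verify that $a_1(L-\mu) + \mu A_{k-1}$ is at least $A_{k-1}\mu$ — which is immediate since $L \geq \mu$ so $a_1(L-\mu)\geq 0$ — so that the coefficient in front of $\|\vv_k - \vv_{k-1}\|^2$ can be safely lower-bounded by $\frac{A_{k-1}\mu}{2}$, matching the claimed statement; the surplus $\frac{a_1(L-\mu)}{2}\|\vv_k-\vv_{k-1}\|^2$ is simply discarded. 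The $\frac{a_k\mu}{2}\|\vv_k-\vx_k\|^2$ term carries through directly from the step-$k$ quadratic penalty. The bulk of the work is thus routine, and the one thing to watch is keeping the normalization by $A_k$ in~\eqref{eq:smooth-sc-lb} consistent with the un-normalized form of the target inequality.
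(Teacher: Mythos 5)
Your proposal is correct and follows essentially the same route as the paper's proof: decompose $m_k(\vv_k)$ as $m_{k-1}(\vv_k)$ plus the step-$k$ increment, expand $m_{k-1}(\vv_k)$ around its minimizer $\vv_{k-1}$ via the Bregman identity, drop the nonnegative first-order term by optimality of $\vv_{k-1}$ over $\mathcal{X}$, identify the curvature of the quadratic $m_{k-1}$ as $a_1(L-\mu)+\mu A_{k-1}$, and discard the nonnegative surplus $\frac{a_1(L-\mu)}{2}\|\vv_k-\vv_{k-1}\|^2$. The only cosmetic difference is your $\nabla f(\vx_k)\to\tnabla f(\vx_k)$ conversion: the $m_k$ actually intended (and used in the paper's proof and in the statement) is built from the noisy gradients $\tnabla f(\vx_i)$, so no conversion, and hence no residual $-a_k\innp{\veta_k,\vv_k-\vx_k}$ term, is needed.
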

\begin{proof}
Observe that, by the definition of $m_k(\cdot)$, $m_k(\vv_k) = m_{k-1}(\vv_k) + a_k \innp{\tnabla f(\vx_k), \vv_k - \vx_k} + a_k \frac{\mu}{2}\|\vv_k - \vx_k\|.$ 

The rest of the proof bounds $m_{k-1}(\vv_k) - m_{k-1}(\vv_{k-1})$. Observe that, as $\vv_{k-1} = \argmin_{\vu \in \mathcal{X}}m_{k-1}(\vu)$, it must be $\innp{\nabla m_{k-1}(\vv_{k-1}), \vu - \vv_{k-1}} \geq 0$, $\forall \vu \in \mathcal{X}$. As Bregman divergence is blind to linear terms:
\begin{align*}
m_{k-1}(\vv_k) - m_{k-1}(\vv_{k-1}) &=  \innp{\nabla m_{k-1}(\vv_{k-1}), \vu - \vv_{k-1}} + D_{m_{k-1}}(\vv_{k}, \vv_{k-1})\\
&\geq D_{m_{k-1}}(\vv_{k}, \vv_{k-1})\\
&= \frac{A_{k-1}\mu}{2}\|\vv_k - \vv_{k-1}\|^2 + \frac{a_{1}(L-\mu)}{2}\|\vv_k - \vv_{k-1}\|^2.
\end{align*}
The rest of the proof is by $\frac{a_{1}(L-\mu)}{2}\|\vv_k - \vv_{k-1}\|^2 \geq 0$.
\end{proof}

We are now ready to move to the main part of the convergence argument, namely, to show that $A_k G_k \leq A_{k-1}G_{k-1}$ for a certain choice of $a_k$.

\begin{lemma}\label{lema:ssc-gap-reduction}
Let $\psi(\vx) = \frac{a_1(L-\mu)}{2}\|\vx\|^2$ and $0<a_k \leq A_k \sqrt{\frac{\mu}{L}}$. Then: $A_k G_k \leq A_{k-1}G_{k-1} + E_k^{\eta}$, where $E_k^{\eta} = a_k \innp{\veta_k, \vx_* - \vv_k}$.
\end{lemma}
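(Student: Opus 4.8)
The plan is to mirror the structure of Lemma~\ref{lemma:total-error}: write $A_kG_k - A_{k-1}G_{k-1} = (A_kU_k - A_{k-1}U_{k-1}) - (A_kL_k - A_{k-1}L_{k-1})$ and bound the two changes separately, aiming to show that everything apart from $E_k^{\eta}$ telescopes to a non-positive quantity. For the upper bound, since $U_k = f(\vy_k)$ and the $\vy$-update gives $A_k\vy_k = A_{k-1}\vy_{k-1} + a_k\vv_k$, I would expand $A_kU_k - A_{k-1}U_{k-1} = a_kf(\vx_k) + A_k(f(\vy_k)-f(\vx_k)) + A_{k-1}(f(\vx_k)-f(\vy_{k-1}))$, then control $f(\vy_k)-f(\vx_k)$ by $L$-smoothness and $f(\vx_k)-f(\vy_{k-1})$ by $\mu$-strong convexity. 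The latter is crucial here, as (unlike the non-strongly-convex case) it supplies an extra negative term $-\tfrac{A_{k-1}\mu}{2}\|\vx_k-\vy_{k-1}\|^2$.

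For the lower bound I would use $A_kL_k - A_{k-1}L_{k-1} = a_kf(\vx_k) + (m_k(\vv_k)-m_{k-1}(\vv_{k-1})) - a_k\innp{\veta_k, \vx_*-\vx_k}$ and invoke Proposition~\ref{prop:change-in-min-ssc} to lower bound $m_k(\vv_k)-m_{k-1}(\vv_{k-1})$. Substituting $\tnabla f(\vx_k) = \nabla f(\vx_k) + \veta_k$ and merging the two noise contributions via $a_k\innp{\veta_k, \vv_k-\vx_k} - a_k\innp{\veta_k, \vx_*-\vx_k} = -E_k^{\eta}$ isolates exactly the claimed noise term. After subtracting the two changes, the whole inequality reduces to showing that the purely deterministic remainder
\[
T \defeq A_k\tfrac{L}{2}\|\vy_k-\vx_k\|^2 - A_{k-1}\tfrac{\mu}{2}\|\vx_k-\vy_{k-1}\|^2 - \tfrac{A_{k-1}\mu}{2}\|\vv_k-\vv_{k-1}\|^2 - \tfrac{a_k\mu}{2}\|\vv_k-\vx_k\|^2
\]
is at most zero.

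Two reductions then finish the argument. First, all first-order terms cancel: collecting the coefficient of $\nabla f(\vx_k)$ gives $A_k(\vy_k-\vx_k) + A_{k-1}(\vx_k-\vy_{k-1}) - a_k(\vv_k-\vx_k)$, which vanishes identically once the three \magdp~updates are substituted (using $A_{k-1}+a_k=A_k$ together with $A_k\vy_k = A_{k-1}\vy_{k-1}+a_k\vv_k$). Second, for the quadratic remainder I would set $p \defeq \vv_k-\vv_{k-1}$, $q \defeq \vv_{k-1}-\vy_{k-1}$ and verify, from the updates with $\theta_k = a_k/A_k$, the identities $\vx_k-\vy_{k-1} = \tfrac{\theta_k}{1+\theta_k}q$, $\vv_k-\vx_k = p + \tfrac{1}{1+\theta_k}q$, and $\vy_k-\vx_k = \theta_k(p + \tfrac{\theta_k}{1+\theta_k}q)$. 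Using the step-size hypothesis $\theta_k \le \sqrt{\mu/L}$ in the form $L\theta_k^2\le\mu$ to replace the smoothness coefficient $A_k\tfrac{L}{2}\theta_k^2$ by $A_k\tfrac{\mu}{2}$, the claim $T\le 0$ becomes the $\mu,L$-free inequality
\[
A_k\big\|p+\tfrac{\theta_k}{1+\theta_k}q\big\|^2 \le A_{k-1}\big(\tfrac{\theta_k}{1+\theta_k}\big)^2\|q\|^2 + A_{k-1}\|p\|^2 + a_k\big\|p+\tfrac{1}{1+\theta_k}q\big\|^2 .
\]

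I expect this last inequality to be the main obstacle, but it resolves cleanly. Expanding both sides, the $\|p\|^2$ coefficients match (both equal $A_k$), the $\innp{p,q}$ coefficients match (both equal $2a_k/(1+\theta_k)$, using $A_k\theta_k=a_k$), and the whole difference collapses to
\[
a_k\Big(\big(\tfrac{1}{1+\theta_k}\big)^2 - \big(\tfrac{\theta_k}{1+\theta_k}\big)^2\Big)\|q\|^2 = a_k\,\tfrac{1-\theta_k}{1+\theta_k}\,\|q\|^2 \ge 0,
\]
which is non-negative precisely because $\theta_k\le\sqrt{\mu/L}\le 1$. Hence $T\le 0$, and therefore $A_kG_k - A_{k-1}G_{k-1}\le E_k^{\eta}$, as claimed. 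The only delicate points are keeping track of signs when converting the inexact gradient to the true gradient plus noise, and confirming the algebraic identities for $\vx_k-\vy_{k-1}$, $\vv_k-\vx_k$, and $\vy_k-\vx_k$; everything else is routine expansion.
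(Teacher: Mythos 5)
Your proof is correct, and the algebra you flag as delicate does check out: the coefficient of $\nabla f(\vx_k)$ vanishes by $A_{k-1}+a_k=A_k$ and $A_k\vy_k=A_{k-1}\vy_{k-1}+a_k\vv_k$; the three identities in $p,q$ follow from the \magdp~updates; and the final quadratic inequality holds with surplus $a_k\frac{1-\theta_k}{1+\theta_k}\|q\|^2\ge 0$ (using $A_{k-1}\theta_k^2+a_k-a_k\theta_k=a_k(1-\theta_k^2)$). Your route shares the paper's skeleton --- the same split into upper- and lower-bound changes and the same appeal to Proposition~\ref{prop:change-in-min-ssc} --- but handles the deterministic remainder differently. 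The paper first merges the two quadratics $\frac{A_{k-1}\mu}{2}\|\vv_k-\vv_{k-1}\|^2+\frac{a_k\mu}{2}\|\vv_k-\vx_k\|^2$ into $\frac{\mu A_k}{2}\|\vv_k-\vw_k\|^2$ via Jensen's inequality with $\vw_k=\frac{A_{k-1}}{A_k}\vv_{k-1}+\frac{a_k}{A_k}\vx_k$, converts $\mu$ to $L\theta_k^2$ by the step-size condition, and then absorbs the linear-plus-quadratic expression into $A_k\bigl(f(\vy_k)-f(\vx_k)\bigr)$ by smoothness, since $\vy_k=\vx_k+\theta_k(\vv_k-\vw_k)$; only plain convexity of $f$ is used for the leftover term toward $\vy_{k-1}$. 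You instead expand everything and verify non-positivity of the quadratic form by direct computation. The paper's argument is shorter and coordinate-free; yours is more mechanical but makes the slack explicit, and it reveals that the strong-convexity term $-\frac{A_{k-1}\mu}{2}\|\vx_k-\vy_{k-1}\|^2$ you inject into the upper-bound change is not actually needed --- dropping it still leaves a nonnegative surplus $\frac{a_k(1-\theta_k)}{(1+\theta_k)^2}\|q\|^2$, consistent with the paper's use of mere convexity at that step.
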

\begin{proof}
As $U_k = f(\vy_k)$, we have that:
\begin{equation}\label{eq:ssc-ub-change}
A_{k}U_k - A_{k-1}U_{k-1} \leq A_k f(\vy_k) - A_{k-1}f(\vy_{k-1}).
\end{equation}

Using Proposition~\ref{prop:change-in-min-ssc}, the change in the lower bound is:
\begin{equation}\label{eq:ssc-lb-change}
\begin{aligned}
A_kL_k - A_{k-1}L_{k-1}
\geq & a_k f(\vx_k) +a_k\innp{\tnabla f(\vx_k), \vv_k - \vx_k} + \frac{A_{k-1}\mu}{2}\|\vv_k - \vv_{k-1}\|^2 + \frac{a_k\mu}{2}\|\vv_k - \vx_k\|^2\\
&+ a_k \innp{\veta_k, \vx_* - \vx_k}.
\end{aligned}
\end{equation}

Denote $\vw_{k} = \frac{A_{k-1}}{A_k}\vv_{k-1} + \frac{a_k}{A_k}\vx_{k}$. By Jensen's Inequality:
\begin{equation}\label{eq:ssc-jensen}
\begin{aligned}
\frac{A_{k-1}\mu}{2}\|\vv_k - \vv_{k-1}\|^2 + \frac{a_k\mu}{2}\|\vv_k - \vx_k\|^2 \geq \frac{\mu A_k}{2}\|\vv_k - \vw_k\|^2.
\end{aligned}
\end{equation}
Write $a_k\innp{\tnabla f(\vx_k), \vv_k - \vx_k}$ as:
\begin{equation}\label{eq:ssc-innp}
\begin{aligned}
 a_k\innp{\tnabla f(\vx_k), \vv_k - \vx_k} = a_k\innp{\nabla f(\vx_{k}), \vv_k - \vw_k} + a_k \innp{\nabla f(\vx_k), \frac{A_{k-1}}{A_k}(\vv_{k-1}-\vx_k)} + a_k \innp{\veta_k, \vv_k - \vx_k}.
\end{aligned}
\end{equation}
As $\frac{a_k}{A_k} \leq \sqrt{\frac{\mu}{L}}$ and by smoothness of $f(\cdot):$
\begin{align}
a_k\innp{\nabla f(\vx_{k}), \vv_k - \vw_k} + \frac{\mu A_k}{2}\|\vv_k - \vw_k\|^2
&\geq A_k \bigg(\innp{\nabla f(\vx_k), \Big(\vx_k + \frac{a_k}{A_k} (\vv_k - \vw_k)\Big)-\vx_k} + \frac{L}{2}\|\frac{a_k}{A_k}(\vv_k - \vw_k)\|^2\bigg)\notag\\
&\geq A_k\left(f\left(\vx_k + \frac{a_k}{A_k} (\vv_k - \vw_k)\right) - f(\vx_k)\right).\label{eq:ssc_sc-wk-v-k}
\end{align}

Combining (\ref{eq:ssc-lb-change})-(\ref{eq:ssc_sc-wk-v-k}), we have the following bound for the change in the lower bound:
\begin{equation*}
\begin{aligned}
A_k L_k - A_{k-1}L_{k-1} \geq & A_k f\left(\vx_k + \frac{a_k}{A_k} (\vv_k - \vw_k)\right) - A_{k-1}f(\vx_k) + a_k \innp{\nabla f(\vx_k), \frac{A_{k-1}}{A_k}(\vv_{k-1}-\vx_k)}\\
&+ a_k \innp{\veta_k, \vx_* - \vv_k}.
\end{aligned}
\end{equation*}
Using the definition of $\vw_k$, $\theta_k = \frac{a_k}{A_k},$ and (\ref{eq:magdp}), it is not hard to verify that $\vy_k = \vx_k + \frac{a_k}{A_k} (\vv_k - \vw_k)$ and $\frac{a_k}{A_k}(\vv_{k-1}-\vx_k) = \vx_k - \vy_{k-1}$, which, using the convexity of $f(\cdot)$, gives:
\begin{equation}\label{eq:ssc-lb-change-final}
A_{k}L_k - A_{k-1}L_{k-1} \geq A_k f(\vy_k) - A_{k-1}f(\vy_{k-1}) - a_k \innp{\veta_k, \vx_* - \vv_k}.
\end{equation}
Combining (\ref{eq:ssc-lb-change-final}) and (\ref{eq:ssc-ub-change}), the proof follows.
\end{proof}

\begin{theorem}\label{thm:magdp-convergence}
Let $\psi(\vx) = \frac{L-\mu}{2}\|\vx\|^2$, $a_1 = 1$, $\frac{a_i}{A_i} = \gamma_i \leq \sqrt{\frac{\mu}{L}}$ for $i \geq 2$, and let $\vy_k, \vx_k$ evolve according to (\ref{eq:magdp}). Then, $\forall k \geq 1$:
\begin{align*}
f(\vy_k) - f(\vx_*) &= \frac{A_1}{A_k}\cdot\frac{(L-\mu)\|\vx_* - \vx_0\|^2}{2} + \frac{\sum_{i=1}^k a_i \innp{\veta_i, \vx_* - \vv_i}}{A_k}\\
&\leq \left(\Pi_{i=1}^k\left(1- \gamma_i\right)\right)\frac{(L-\mu)\|\vx_* - \vx_0\|^2}{2} + \frac{\sum_{i=1}^k a_i \innp{\veta_i, \vx_* - \vv_i}}{A_k}.
\end{align*}
\end{theorem}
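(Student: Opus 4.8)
The plan is to derive the bound by simply telescoping the per-iteration gap contraction from Lemma~\ref{lema:ssc-gap-reduction}, anchoring the recursion with the initial-gap estimate of Proposition~\ref{prop:init-gap-ssc}, and then rewriting the ratio $A_1/A_k$ as the advertised product. Since, by construction of the upper and lower bounds, $f(\vy_k) - f(\vx_*) \leq G_k$, it suffices to bound $A_k G_k$ and divide through by $A_k$.

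First I would unroll Lemma~\ref{lema:ssc-gap-reduction}, which under the hypothesis $\gamma_i = a_i/A_i \leq \sqrt{\mu/L}$ (assumed for $i \geq 2$) yields $A_i G_i \leq A_{i-1}G_{i-1} + E_i^{\eta}$ with $E_i^{\eta} = a_i\innp{\veta_i, \vx_* - \vv_i}$. Summing this from $i = 2$ to $k$ telescopes to $A_k G_k \leq A_1 G_1 + \littlesum_{i=2}^k E_i^{\eta}$. Substituting the base case from Proposition~\ref{prop:init-gap-ssc} — whose hypothesis $\psi(\vx) = \frac{a_1(L-\mu)}{2}\|\vx\|^2$ coincides with the theorem's $\psi$ because $a_1 = 1$ — namely $A_1 G_1 \leq \frac{A_1(L-\mu)}{2}\|\vx_* - \vx_0\|^2 + E_1^{\eta}$, absorbs the base term into the sum and gives
\begin{equation*}
A_k G_k \leq \frac{A_1(L-\mu)}{2}\|\vx_* - \vx_0\|^2 + \littlesum_{i=1}^k a_i\innp{\veta_i, \vx_* - \vv_i}.
\end{equation*}
Dividing by $A_k$ and invoking $f(\vy_k) - f(\vx_*) \leq G_k$ then produces the first displayed line.

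It remains to convert $A_1/A_k$ into the product. Since $A_i = A_{i-1} + a_i$, for every $i \geq 2$ we have $A_{i-1}/A_i = 1 - a_i/A_i = 1 - \gamma_i$, so the ratio telescopes to
\begin{equation*}
\frac{A_1}{A_k} = \prod_{i=2}^k \frac{A_{i-1}}{A_i} = \prod_{i=2}^k (1 - \gamma_i),
\end{equation*}
which is exactly the contraction factor in the second line, the noise sum being carried over unchanged.

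Once Lemma~\ref{lema:ssc-gap-reduction} and Proposition~\ref{prop:init-gap-ssc} are in hand this is pure bookkeeping, so there is no substantive analytic obstacle at the level of the theorem itself; all the strong-convexity work (the Jensen step, the smoothness comparison at $\vw_k$, and the identities relating $\vy_k$, $\vx_k$, and $\vv_k$) has already been discharged inside that lemma. The only two points that require care are essentially notational: the product must be read as running over $i = 2,\dots,k$, since the factor for $i = 1$ would be $1 - a_1/A_1 = 0$ (as $A_1 = a_1$) and must be excluded; and the first displayed line is really an upper bound, inherited from $f(\vy_k) - f(\vx_*) \leq G_k$, rather than a strict equality.
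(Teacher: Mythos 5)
Your proof is correct and follows essentially the same route as the paper's: telescoping Lemma~\ref{lema:ssc-gap-reduction}, anchoring with Proposition~\ref{prop:init-gap-ssc}, and rewriting $A_1/A_k$ as $\prod_{i\geq 2}(1-\gamma_i)$ via $A_{i-1}/A_i = 1-\gamma_i$. Your remark that the product must start at $i=2$ (since $\gamma_1 = a_1/A_1 = 1$ would annihilate it) is a valid catch of a harmless indexing slip that the paper itself makes in both the statement and its proof.
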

\begin{proof}
Applying Lemma~\ref{lema:ssc-gap-reduction}, it follows that $G_k \leq \frac{A_1 G_1}{A_k} + \frac{\sum_{i=1}^k E_i^{\eta}}{A_k} = \frac{A_1}{A_2}\cdot \frac{A_2}{A_3}\cdot\dots\cdot \frac{A_{k-1}}{A_k}G_1 + \frac{\sum_{i=1}^k E_i^{\eta}}{A_k}$. As $\frac{A_{i-1}}{A_i} = 1 - \frac{a_i}{A_i} = 1 - \gamma_i$, we have $G_k \leq \left(\Pi_{i=1}^k\left(1- \gamma_i\right)\right)G_1 + \frac{\sum_{i=1}^k E_i^{\eta}}{A_k}$. The rest of the proof is by applying Proposition~\ref{prop:init-gap-ssc} and using that $f(\vy_k) - f(\vx_*)\leq G_k$. 
\end{proof}
Using the same arguments for bounding the noise term as in the case of smooth minimization (Section~\ref{sec:algorithm}), we have the following corollary.
\begin{corollary}[of Theorem~\ref{thm:magdp-convergence}]\label{cor:ssc-noise-bounds}
If $\veta_i = \zeros$ (the noiseless gradient case), setting $\gamma_i = \sqrt{\frac{\mu}{L}}$, we recover the standard convergence result for accelerated smooth and strongly convex minimization:
$$
f(\vy_k) - f(\vx_*) \leq \left(1- \sqrt{\frac{\mu}{L}}\right)^k\frac{L-\mu}{2}\|\vx_* - \vx_0\|^2.
$$
If $\mathbb{E}[\|\veta_i\|]\leq M_i$ and $\max_{\vu \in \mathcal{X}}\|\vx_* - \vu\|\leq R_{\vx_*}$, then setting $\gamma_i = \sqrt{\frac{\mu}{L}}$:
$$
\mathbb{E}[f(\vy_k) - f(\vx_*)] \leq \left(1- \sqrt{\frac{\mu}{L}}\right)^k\frac{L-\mu}{2}\|\vx_* - \vx_0\|^2 + \frac{R_{\vx_*}\sum_{i=1}^k a_i M_i}{A_k}.
$$
If $\veta_i$'s are zero-mean and independent and $\mathbb{E}[\|\veta_i\|^2]\leq \sigma^2$, then:
\begin{align*}
\mathbb{E}[f(\vy_k) - f(\vx_*)] &\leq \left(\Pi_{i=1}^k\left(1- \gamma_i\right)\right)\frac{L-\mu}{2}\|\vx_* - \vx_0\|^2 + \frac{\sigma^2\sum_{i=1}^k \frac{{a_i}^2}{A_i}}{\mu A_k}
\end{align*}
In particular, setting:
\begin{itemize}
\item $\frac{a_i}{A_i}=\gamma_i = \sqrt{\frac{\mu}{L}},$ 
$$
\mathbb{E}[f(\vy_k) - f(\vx_*)] \leq  \left(1- \sqrt{\frac{\mu}{L}}\right)^k\frac{L-\mu}{2}\|\vx_* - \vx_0\|^2 + \frac{\sigma^2}{\sqrt{\mu L} }
$$
\item $a_i = i^p$ for $p \in \mathbb{Z}_+,$
$$
\mathbb{E}[f(\vy_k) - f(\vx_*)] = O \left(\frac{p+1}{k^{p+1}}\cdot\frac{(L-\mu)\|\vx_* - \vx_0\|^2}{2} + \frac{(p+1)^2}{p k}\cdot\frac{\sigma^2}{\mu}\right).
$$
\end{itemize}
\end{corollary}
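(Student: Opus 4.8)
The plan is to treat Corollary~\ref{cor:ssc-noise-bounds} as a collection of specializations of Theorem~\ref{thm:magdp-convergence}, whose bound already separates cleanly into a deterministic term $\tfrac{A_1}{A_k}\cdot\tfrac{L-\mu}{2}\|\vx_*-\vx_0\|^2$ and a noise term $\tfrac{1}{A_k}\sum_{i=1}^k a_i\innp{\veta_i,\vx_*-\vv_i}$. For each listed case I would handle these two terms separately: the deterministic term reduces to evaluating $\Pi_{i=1}^k(1-\gamma_i)$ (equivalently $A_1/A_k$) for the chosen step-size schedule, while the noise term is bounded in expectation by reusing, essentially verbatim, one of the three noise-handling arguments already developed for the non-strongly-convex case in Section~\ref{sec:algorithm}.

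Concretely, for the noiseless case I would set $\veta_i=\zeros$ and substitute $\gamma_i=\sqrt{\mu/L}$, so that $\Pi(1-\gamma_i)=(1-\sqrt{\mu/L})^k$. For the bounded-mean, bounded-diameter case I would bound $\innp{\veta_i,\vx_*-\vv_i}\leq \|\veta_i\|\,\|\vx_*-\vv_i\|\leq R_{\vx_*}\|\veta_i\|$ pointwise (using $\vv_i\in\mathcal{X}$ and self-duality of $\|\cdot\|_2$), then take expectations and apply $\mathbb{E}[\|\veta_i\|]\leq M_i$ — exactly the argument of Proposition~\ref{prop:noise-bnd-diam}.

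The one genuinely new step — and the main obstacle — is the zero-mean, independent, bounded-variance case, which requires adapting Lemma~\ref{lemma:unconstr-noise-bnd} to the strongly-convex lower bound. The key observation is that $\vv_i$ minimizes an objective whose quadratic part has modulus $C_i = A_i\mu + a_1(L-\mu)$ (the $A_i\mu$ coming from the $\sum_j a_j\tfrac{\mu}{2}\|\vu-\vx_j\|^2$ terms and $a_1(L-\mu)$ from $\psi$), into which the step-$i$ noise enters linearly through $a_i\veta_i$. This objective is $C_i$-strongly convex, so its conjugate is $\tfrac{1}{C_i}$-smooth by Fact~\ref{fact:smoothness-of-cvx-conj}. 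I would then define the surrogate $\vvh_i$ as the minimizer obtained by replacing the noisy step-$i$ gradient $\tnabla f(\vx_i)$ with the true $\nabla f(\vx_i)$; thus $\vvh_i$ is measurable with respect to $\veta_1,\dots,\veta_{i-1}$, hence independent of $\veta_i$, and $\tfrac{1}{C_i}$-smoothness gives $\|\vvh_i-\vv_i\|\leq \tfrac{a_i}{C_i}\|\veta_i\|$. Splitting $\innp{\veta_i,\vx_*-\vv_i}=\innp{\veta_i,\vx_*-\vvh_i}+\innp{\veta_i,\vvh_i-\vv_i}$, the first term vanishes in expectation (zero-mean, independent) and the second is at most $\tfrac{a_i}{C_i}\|\veta_i\|^2\leq \tfrac{a_i}{A_i\mu}\|\veta_i\|^2$, using $C_i\geq A_i\mu$. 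Summing gives $\mathbb{E}[\sum_i a_i\innp{\veta_i,\vx_*-\vv_i}]\leq \tfrac{\sigma^2}{\mu}\sum_i \tfrac{a_i^2}{A_i}$, the claimed noise term after dividing by $A_k$. The growth of the modulus $C_i\sim A_i\mu$ (rather than a constant $\mu$ as in Section~\ref{sec:algorithm}) is precisely what prevents noise accumulation here, and is the conceptual heart of the argument.

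Finally, the two itemized bounds follow by plugging each schedule into this generic bound and estimating the sums, which I regard as routine. For $\gamma_i=\sqrt{\mu/L}$ the sequence $A_i$ grows geometrically with ratio $(1-\sqrt{\mu/L})^{-1}$, so $a_i^2/A_i=(\mu/L)A_i$ and the geometric sum gives $\sum_{i=1}^k A_i/A_k\leq \sqrt{L/\mu}$, collapsing the noise term to $\sigma^2/\sqrt{\mu L}$. For $a_i=i^p$ I would use $A_i\sim i^{p+1}/(p+1)$, so $a_i^2/A_i\sim (p+1)i^{p-1}$ and $\sum_{i\le k} a_i^2/A_i\sim \tfrac{p+1}{p}k^p$, while $A_1/A_k\sim (p+1)/k^{p+1}$; dividing by $A_k$ and $\mu$ recovers the stated $O\!\big(\tfrac{(p+1)^2}{pk}\cdot\tfrac{\sigma^2}{\mu}\big)$ noise term and $O\!\big(\tfrac{p+1}{k^{p+1}}\big)$ deterministic term.
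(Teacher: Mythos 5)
Your proposal is correct and follows essentially the same route as the paper's own proof: the paper likewise treats the first two cases as immediate, and for the zero-mean case defines the aggregated regularizer $\psi_k(\vx)=\sum_{i=1}^k a_i\frac{\mu}{2}\|\vx-\vx_i\|^2+\frac{\mu_0}{2}\|\vx-\vx_0\|^2$ with strong-convexity modulus $\mu A_k+\mu_0$, introduces the same noise-free surrogate $\vvh_k$, and uses $\frac{1}{\mu A_k}$-smoothness of the conjugate to get $\mathbb{E}[a_k\innp{\veta_k,\vx_*-\vv_k}]\leq \frac{{a_k}^2}{\mu A_k}\mathbb{E}[\|\veta_k\|^2]$. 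Your identification of the growing modulus $C_i\sim\mu A_i$ as the reason noise does not accumulate, and your evaluations of the sums for the two step-size schedules, match the paper's argument.
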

\begin{proof}
The bounds for $\veta_i = \zeros$ and for $\mathbb{E}[\|\veta_i\|]\leq M_i$ and $\max_{\vu \in \mathcal{X}}\|\vx_* - \vu\|\leq R_{\vx_*}$ are straightforward. 

Assume that $\veta_i$'s are zero-mean and independent and denote $\psi_k(\vx) = \sum_{i=1}^k a_i \frac{\mu}{2}\|\vx - \vx_i\|^2 + \frac{\mu_0}{2}\|\vx - \vx_0\|^2$. Observe that the strong convexity parameter of $\psi_k$ is $\mu A_k + \mu_0>\mu A_k.$ From Fact~\ref{fact:danskin}, $\vv_k = \nabla \psi^*_k(\vz_k)$. Similarly as for the case of smooth minimization, let $\vvh_k = \nabla \psi^*(\vz_k + a_k \veta_k)$. Then $\vvh_k$ is independent of $\veta_k,$ and, using Fact~\ref{fact:smoothness-of-cvx-conj}, we have:
\begin{align*}
\mathbb{E}[a_k\innp{\veta_k, \vx_* - \vv_k}] &= \mathbb{E}[a_k \innp{\veta_k, \vx_* - \vvh_k}] + \mathbb{E}[a_k \innp{\veta_k, \vvh_k - \vv_k}]\\
&\leq \frac{{a_k}^2}{\mu A_k}\|\veta_k\|^2.
\end{align*}
Combining with Theorem~\ref{thm:magdp-convergence}, we get $\mathbb{E}[f(\vy_k) - f(\vx_*)] \leq \left(\Pi_{i=1}^k\left(1- \gamma_i\right)\right)\frac{L-\mu}{2}\|\vx_* - \vx_0\|^2 + \frac{\sigma^2\sum_{i=1}^k \frac{{a_i}^2}{A_i}}{\mu A_k}.$ The rest of the proof follows by plugging in particular choices of $a_i$.
\end{proof}

Let us make a few more remarks here. When $\veta_i$'s are zero-mean, independent, and $\mathbb{E}[\|\veta_i\|^2]\leq \sigma^2,$ even the vanilla version of \magdp~ does not accumulate noise (the noise averages out). Under the same assumptions and when $a_i = i,$ we recover the asymptotic bound from~\cite{ghadimi2012optimal}.\footnote{Note that the independence of $\veta_i$'s is a stronger assumption than used in~\cite{ghadimi2012optimal}. Nevertheless, we can obtain the same bounds as stated in Corollary~\ref{cor:ssc-noise-bounds} for the same assumptions as in~\cite{ghadimi2012optimal} using the same arguments as for the case of smooth minimization (see Section~\ref{app:gen-stochastic-models}).} More generally, $a_i = i^p$ for any constant integer $p$ gives a convergence bound for which the deterministic term vanishes at rate $1/k^{p+1}$ while the noise term vanishes at rate $1/k.$ When $p = \log(k)$  for a fixed number of iterations $k$ of \magdp, we get 
$
\mathbb{E}[f(\vy_k) - f(\vx_*)] = O \left(\frac{\log(k)}{k^{\log(k)}}\cdot\frac{(L-\mu)\|\vx_* - \vx_0\|^2}{2} + \frac{\log(k)}{k}\cdot\frac{\sigma^2}{\mu}\right),$ i.e., the deterministic term (independent of noise) decreases super-polynomially with the iteration count, while the noise term decreases at rate $\log(k)/k$. This is a much stronger bound than the one from~\cite{ghadimi2012optimal} and closer to the theoretical lower bound $\Omega \left(\left(1- \sqrt{\frac{\mu}{L}}\right)^k\cdot\frac{(L-\mu)\|\vx_* - \vx_0\|^2}{2} + \cdot\frac{\sigma^2}{\mu k}\right)$ from~\cite{nemirovskii1983problem}.

Note that in the setting of constrained (bounded-diameter) minimization, \cite{ghadimi2013optimal} obtained the optimal convergence bound $O \left(\left(1- \sqrt{\frac{\mu}{L}}\right)^k\cdot\frac{(L-\mu)\|\vx_* - \vx_0\|^2}{2} + \cdot\frac{\sigma^2}{\mu k}\right)$ by coupling the algorithm from~\cite{ghadimi2012optimal} with a domain-shrinking procedure resulting in a multi-stage algorithm. We expect it is possible to obtain a similar result for \magdp~by coupling it with the domain-shrinking from~\cite{ghadimi2013optimal}.

\section{Numerical Experiments}\label{sec:experiments}

 \begin{figure*}[ht!]
 \centering
  \vspace{-10pt}
 \subfigure[$\sigma_{\eta}=0$]{\includegraphics[width=0.24\textwidth]{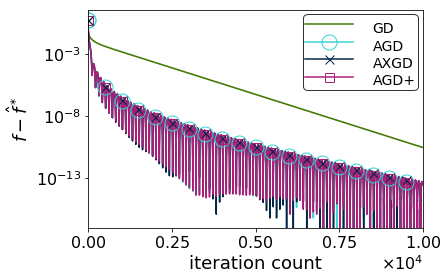}\label{fig:cyc_uc_0}}\vspace{-10pt}
 \subfigure[$\sigma_{\eta}=10^{-5}$]{\includegraphics[width=0.24\textwidth]{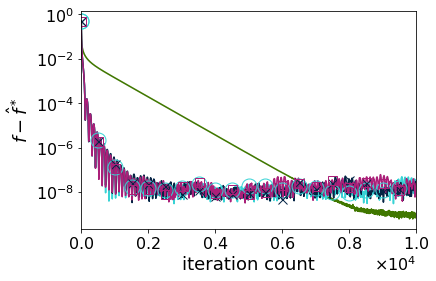}\label{fig:cyc_uc_1e-5}}
 \subfigure[$\sigma_{\eta}=10^{-3}$]{\includegraphics[width=0.24\textwidth]{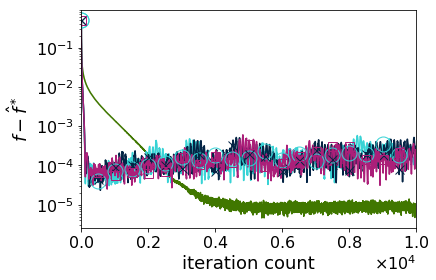}\label{fig:cyc_uc_1e-3}}
 \subfigure[$\sigma_{\eta}=10^{-1}$]{\includegraphics[width=0.24\textwidth]{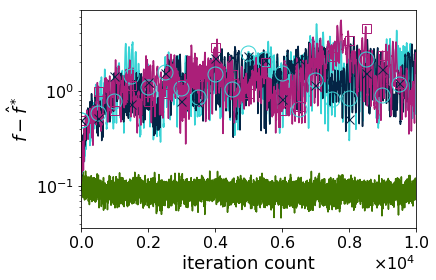}\label{fig:cyc_uc_1e-1}}
 \subfigure[$\sigma_{\eta}=0$]{\includegraphics[width=0.24\textwidth]{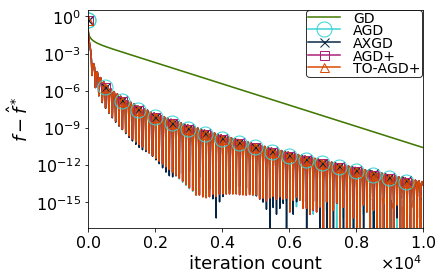}\label{fig:r_cyc_uc_0_sr}} \vspace{-10pt}
 \subfigure[$\sigma_{\eta}=10^{-5}$]{\includegraphics[width=0.24\textwidth]{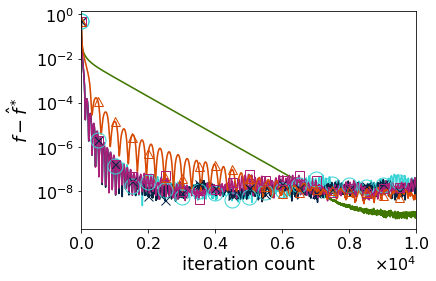}\label{fig:r_cyc_uc_1e-5_sr}}
 \subfigure[$\sigma_{\eta}=10^{-3}$]{\includegraphics[width=0.24\textwidth]{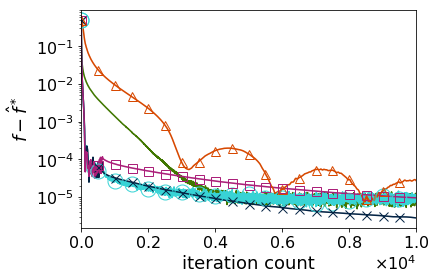}\label{fig:r_cyc_uc_1e-3_sr}}
 \subfigure[$\sigma_{\eta}=10^{-1}$]{\includegraphics[width=0.24\textwidth]{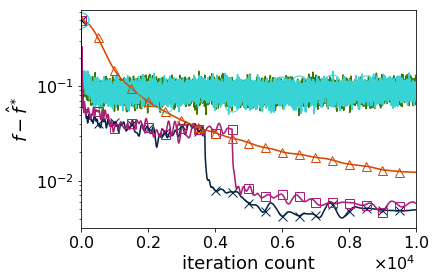}\label{fig:r_cyc_uc_1e-1_sr}}
\subfigure[$\sigma_{\eta}=0$]{\includegraphics[width=0.24\textwidth]{r_cycle_unconstrained_0.png}\label{fig:r_cyc_uc_0}}\vspace{-10pt}
 \subfigure[$\sigma_{\eta}=10^{-5}$]{\includegraphics[width=0.24\textwidth]{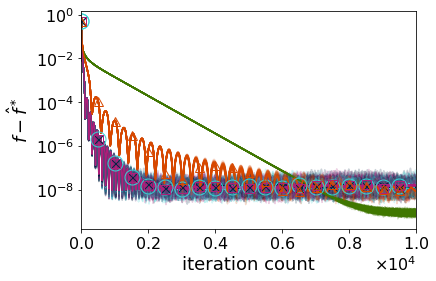}\label{fig:r_cyc_uc_1e-5}}
 \subfigure[$\sigma_{\eta}=10^{-3}$]{\includegraphics[width=0.24\textwidth]{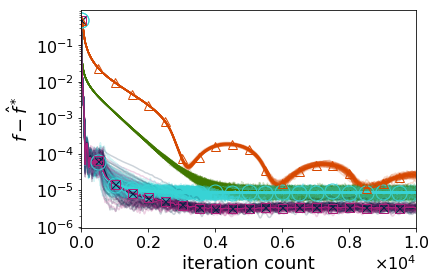}\label{fig:r_cyc_uc_1e-3}}
 \subfigure[$\sigma_{\eta}=10^{-1}$]{\includegraphics[width=0.24\textwidth]{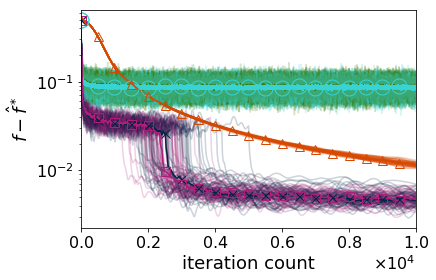}\label{fig:r_cyc_uc_1e-1}}
\caption{Performance of gradient descent (\gd) and accelerated algorithms (\agd, \axgd, \agdp) on a hard instance for unconstrained smooth minimization for $\veta_k \sim \mathcal{N}(\zeros, \sigma_\eta I)$ over $\mathbb{R}^n$:  \protect\subref{fig:cyc_uc_0}-\protect\subref{fig:cyc_uc_1e-1} sample run; \protect\subref{fig:r_cyc_uc_0_sr}-\protect\subref{fig:r_cyc_uc_1e-1_sr} sample run with \textsc{to}-\agdp, \rsd~and~\rsd-2; \protect\subref{fig:r_cyc_uc_0}-\protect\subref{fig:r_cyc_uc_1e-1} median over 50 runs with \textsc{to}-\agdp, \rsd~and \rsd-2.}
\label{fig:test1}\vspace{-10pt}
 \end{figure*}

To illustrate the results, we consider two main problems: a hard instance for smooth minimization (see, e.g.,~\cite{nesterov2013introductory}) and regression problems on Epileptic Seizure Recognition Dataset~\cite{andrzejak2001indications} obtained from the UCI Machine Learning Repository~\cite{Lichman:2013}. All of the problems are in the domain of smooth minimization. The experiments for the case of smooth and strongly convex minimization are deferred to the future version of the paper. 

In all the experiments, we used standard Python libraries to solve the considered problems to high accuracy. The resulting function value is denoted by $\hat{f}^*$ in the figures. In all the problems, we used $\psi(\vx) = \frac{L}{2}\|\vx\|_2^2$ as the regularizer. For constrained problems, we implemented projected gradient descent as the ``\gd'' algorithm. 

In the graphs, \textsc{to-}\agdp~denotes the ``theoretically optimal'' version of \agdp; namely, it corresponds to \agdp~with step sizes chosen according to Corollary~\ref{cor:convergent-bound} and Remark~\ref{remark:convergent-bound}. In all the experiments, we compare the different accelerated algorithms (\agdp, \agd, \axgd) and the non-accelerated \gd~under i.i.d. additive gradient noise $\veta_i \sim \mathcal{N}(\zeros, \sigma_\eta I)$. 

\subsection{``Hard'' Instance for Smooth Minimization}

\paragraph{Unconstrained Minimization}
To understand the worst-case performance of \agdp, we first compare it to Nesterov's~\agd~and~\cite{AXGD}'s \axgd. The instance is an unconstrained minimization problem, where $f(\vx) = \frac{1}{2}\innp{\mA\vx, \vx}-\innp{\vb, \vx}$, $\mA$ is the graph Laplacian of a cycle\footnote{Namely, the difference of a tridiagonal square matrix $\mathbf{C}$ with 1's on the main diagonal and -1's on the remaining diagonals, and matrix $\mathbf{B}$, which is zero everywhere except for $B_{1n} = B_{n1} = 1$.}, $b_1 = -b_n = 1$ and vector $\vb$ is zero elsewhere. The initial point $\vx_0$ is an all-zeros vector. The dimension of the problem is $n = 100$.

The performance of \agdp, \agd, and \axgd~together with the performance of the slower, unaccelerated \gd~on the described worst-case instance is shown in Fig.~\ref{fig:cyc_uc_0}-\ref{fig:cyc_uc_1e-1}, for the exact gradient oracle (Fig.~\ref{fig:cyc_uc_0}) and noise-corrupted gradient oracle with i.i.d. $\veta_i \sim \mathcal{N}(\zeros, \sigma_\eta I)$ (Fig.~\ref{fig:cyc_uc_1e-5}-\ref{fig:cyc_uc_1e-1}). We repeated the same experiments when the parameters for \agdp~are chosen according to Corollary~\ref{cor:convergent-bound} (denoted as \textsc{to-}\agdp) and when  \rsd~and \rsd-2~are employed (Fig.~\ref{fig:r_cyc_uc_0_sr}-\ref{fig:r_cyc_uc_1e-1}).

Without restart and slow-down, all accelerated algorithms perform similarly. In particular, as the noise standard deviation $\sigma_{\eta}$ is increased, the mean and the variance of the approximation error of all accelerated algorithms increases and the noise appears to be accumulating (see, e.g., Fig.~\ref{fig:cyc_uc_1e-1}). On the other hand, \gd~generally converges to an approximation error with lower mean and variance, at the expense of converging at a slower $1/k$ rate. 

When restart and slow-down are used, in the noiseless case (Fig.~\ref{fig:r_cyc_uc_0_sr} and~\ref{fig:r_cyc_uc_0}), there is no difference compared to the vanilla case (Fig.~\ref{fig:cyc_uc_0}), which is what we want -- there is no need to slow down the accelerated algorithms unless their performance is compromised by noise. In the low-noise scenario (Fig.~\ref{fig:r_cyc_uc_1e-5_sr}, \ref{fig:r_cyc_uc_1e-5}), \rsd~does not change the performance of the algorithms in a noticeable way, although, in that case, the performance degradation due to noise is low. As the noise becomes higher (Fig.~\ref{fig:r_cyc_uc_1e-3_sr}, \ref{fig:r_cyc_uc_1e-3}, \ref{fig:r_cyc_uc_1e-5_sr}, \ref{fig:r_cyc_uc_1e-1}),  restart and slow-down noticeably stabilizes all accelerated algorithms, reducing both their mean and their variance. Further, restart and slow-down generally outperforms the ``theoretically optimal'' \agdp~(\textsc{to-}\agdp). 

\begin{figure*}
\centering
\subfigure[$\sigma_{\eta}=0$]{\includegraphics[width=0.24\textwidth]{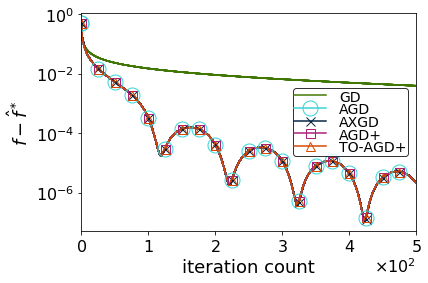}\label{fig:r_cyc_uc_0_500}}
 \subfigure[$\sigma_{\eta}=10^{-5}$]{\includegraphics[width=0.24\textwidth]{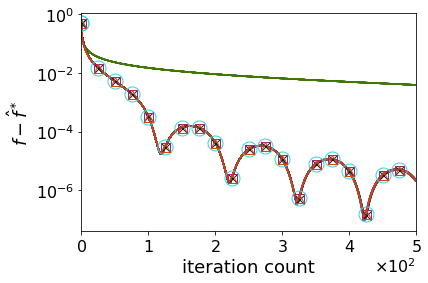}\label{fig:r_cyc_uc_1e-5_500}}
 \subfigure[$\sigma_{\eta}=10^{-3}$]{\includegraphics[width=0.24\textwidth]{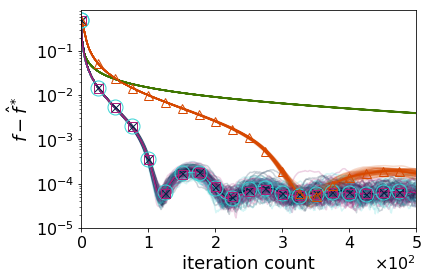}\label{fig:r_cyc_uc_1e-3_500}}
 \subfigure[$\sigma_{\eta}=10^{-1}$]{\includegraphics[width=0.24\textwidth]{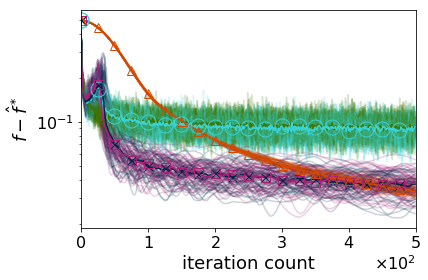}\label{fig:r_cyc_uc_1e-1_500}}
 \subfigure[$\sigma_{\eta}=0$]{\includegraphics[width=0.24\textwidth]{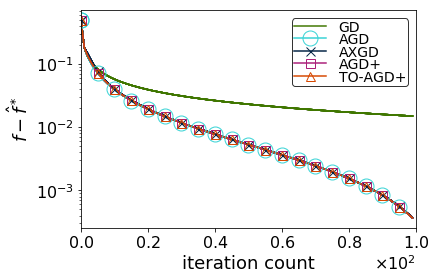}\label{fig:r_cyc_uc_0_100}}
 \subfigure[$\sigma_{\eta}=10^{-5}$]{\includegraphics[width=0.24\textwidth]{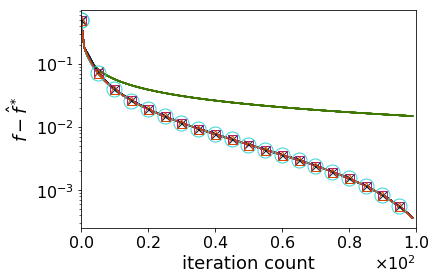}\label{fig:r_cyc_uc_1e-5_100}}
 \subfigure[$\sigma_{\eta}=10^{-3}$]{\includegraphics[width=0.24\textwidth]{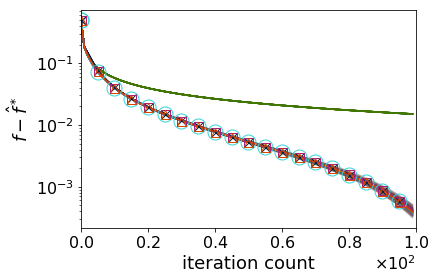}\label{fig:r_cyc_uc_1e-3_100}}
 \subfigure[$\sigma_{\eta}=10^{-1}$]{\includegraphics[width=0.24\textwidth]{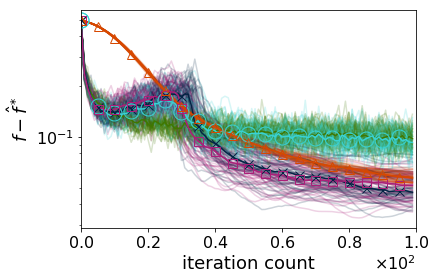}\label{fig:r_cyc_uc_1e-1_100}}
\caption{Median performance of gradient descent (\gd) and accelerated algorithms (\agd, \axgd, \agdp~with restart and slow-down semi-heuristics and \textsc{to-}\agdp) over 50 repeated runs on a hard instance for unconstrained smooth minimization for $\veta_k \sim \mathcal{N}(\zeros, \sigma_\eta I)$ over $\mathbb{R}^n$ and for:  \protect\subref{fig:r_cyc_uc_0_500}-\protect\subref{fig:r_cyc_uc_1e-1_500} 500 iterations; \protect\subref{fig:r_cyc_uc_0_100}-\protect\subref{fig:r_cyc_uc_1e-1_100} 100 iterations.}
\label{fig:uc-cyc-it-small}
 \end{figure*}

Since the step sizes of \textsc{to-}\agdp~depend on the number of iterations, one may hope that \textsc{to-}\agdp~could outperform other accelerated algorithms for a smaller number of iterations. However, this is not true -- for a smaller number of iterations, in the best case \textsc{to-}\agdp~matches the performance of other algorithms with restart and slow-down. When all algorithms have the same performance, they all essentially run their vanilla versions -- without restart and slow down and for their standard accelerated step sizes. This is illustrated in Fig.~\ref{fig:uc-cyc-it-small}.

\begin{figure*}[t!]
\centering
 \subfigure[$\sigma_{\eta}=0$]{\includegraphics[width=0.24\textwidth]{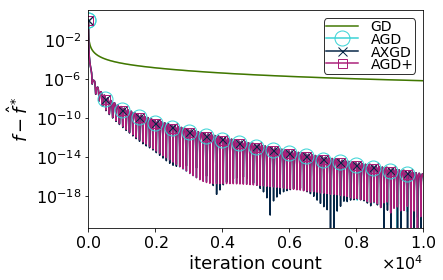}\label{fig:cyc_s_0}}
 \subfigure[$\sigma_{\eta}=10^{-5}$]{\includegraphics[width=0.24\textwidth]{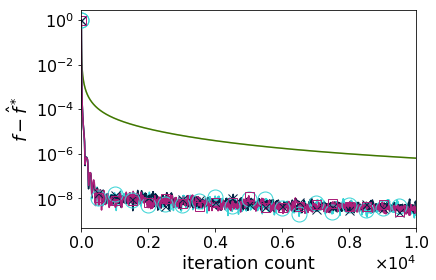}\label{fig:cyc_s_1e-5}}
 \subfigure[$\sigma_{\eta}=10^{-3}$]{\includegraphics[width=0.24\textwidth]{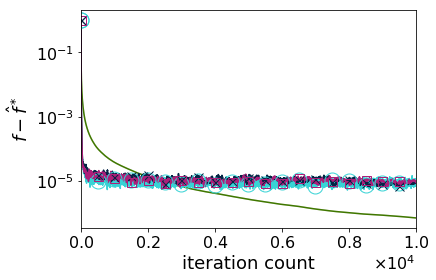}\label{fig:cyc_s_1e-3}}
 \subfigure[$\sigma_{\eta}=10^{-1}$]{\includegraphics[width=0.24\textwidth]{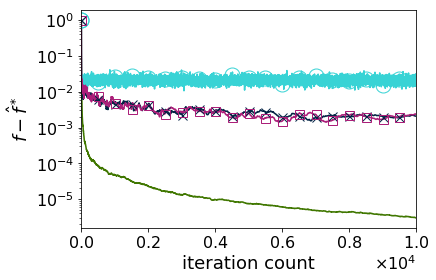}\label{fig:cyc_s_1e-1}}
 \subfigure[$\sigma_{\eta}=0$]{\includegraphics[width=0.24\textwidth]{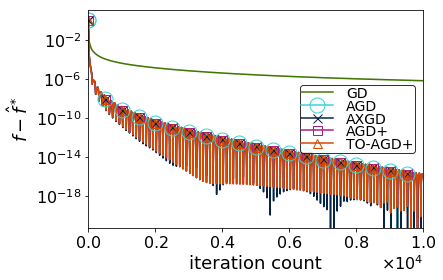}\label{fig:r_cyc_s_0_sr}}
 \subfigure[$\sigma_{\eta}=10^{-5}$]{\includegraphics[width=0.24\textwidth]{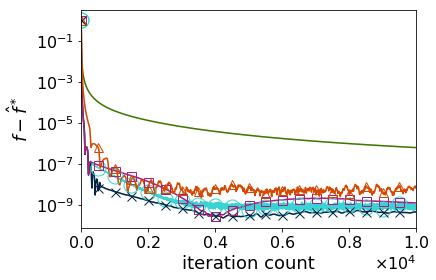}\label{fig:r_cyc_s_1e-5_sr}}
 \subfigure[$\sigma_{\eta}=10^{-3}$]{\includegraphics[width=0.24\textwidth]{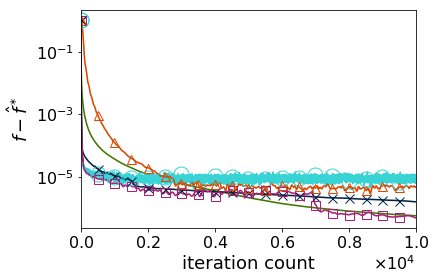}\label{fig:r_cyc_s_1e-3_sr}}
 \subfigure[$\sigma_{\eta}=10^{-1}$]{\includegraphics[width=0.24\textwidth]{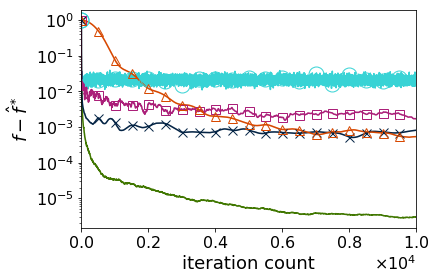}\label{fig:r_cyc_s_1e-1_sr}}
 \subfigure[$\sigma_{\eta}=0$]{\includegraphics[width=0.24\textwidth]{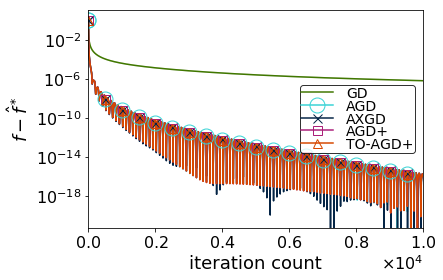}\label{fig:r_cyc_s_0}}
 \subfigure[$\sigma_{\eta}=10^{-5}$]{\includegraphics[width=0.24\textwidth]{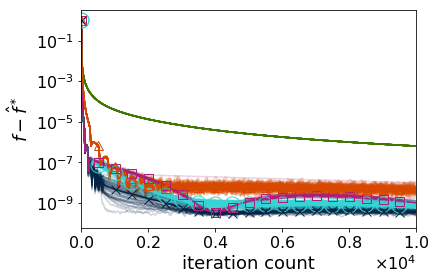}\label{fig:r_cyc_s_1e-5}}
 \subfigure[$\sigma_{\eta}=10^{-3}$]{\includegraphics[width=0.24\textwidth]{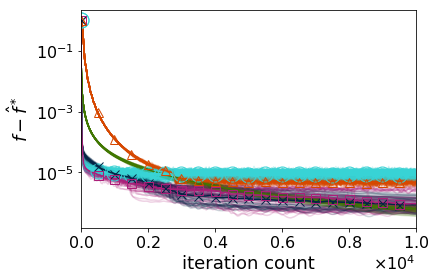}\label{fig:r_cyc_s_1e-3}}
 \subfigure[$\sigma_{\eta}=10^{-1}$]{\includegraphics[width=0.24\textwidth]{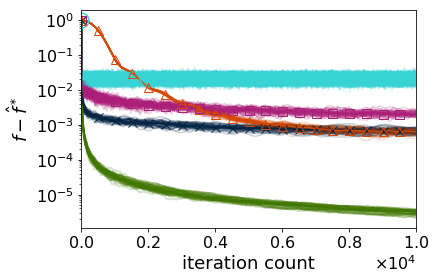}\label{fig:r_cyc_s_1e-1}}
\caption{Performance of gradient descent (\gd) and accelerated algorithms (\agd, \axgd, \agdp) on a hard instance for unconstrained smooth minimization for $\veta_k \sim \mathcal{N}(\zeros, \sigma_\eta I)$ and over probability simplex  \protect\subref{fig:cyc_s_0}-\protect\subref{fig:cyc_s_1e-1} without and \protect\subref{fig:r_cyc_s_0}-\protect\subref{fig:r_cyc_s_1e-1} with \rsd~and \rsd-2.}
\label{fig:test1c}
\end{figure*}

\paragraph{``Hard Instance'' over Simplex}
The set of experiments in Figure~\ref{fig:test1c} correspond to the minimization of the hard instance function for smooth optimization, constrained over the probability simplex. It should be compared to the unconstrained version in Figure~\ref{fig:test1}. As predicted, we observe that the presence of constraints decreases the effect of error accumulation as the boundary of the feasible set limits the variance. Given the low variance due to the constraints, the effect of \rsd~ is less evident for this batch of experiments.

\subsection{Regression on Epileptic Seizure Dataset}

\begin{figure*}[t]
\centering
 \subfigure[$\sigma_{\eta}=0$]{\includegraphics[width=0.24\textwidth]{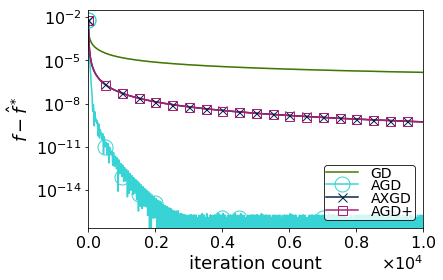}\label{fig:es_lin_l1_0}}
 \subfigure[$\sigma_{\eta}=10^{-5}$]{\includegraphics[width=0.24\textwidth]{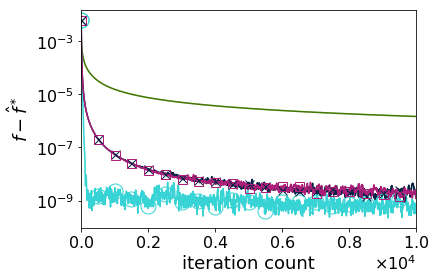}\label{fig:es_lin_l1_1e-5}}
 \subfigure[$\sigma_{\eta}=10^{-3}$]{\includegraphics[width=0.24\textwidth]{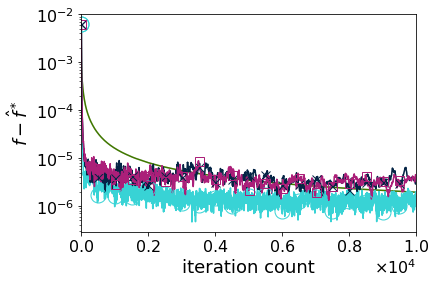}\label{fig:es_lin_l1_1e-3}}
 \subfigure[$\sigma_{\eta}=10^{-1}$]{\includegraphics[width=0.24\textwidth]{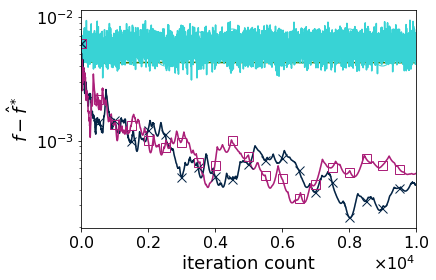}\label{fig:es_lin_l1_1e-1}}
\subfigure[$\sigma_{\eta}=0$]{\includegraphics[width=0.24\textwidth]{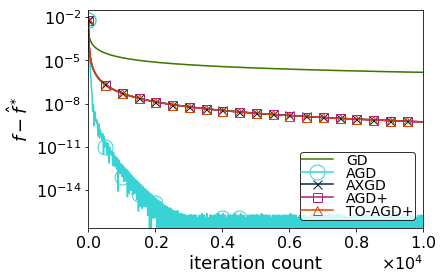}\label{fig:r_es_lin_l1_0_sr}}
 \subfigure[$\sigma_{\eta}=10^{-5}$]{\includegraphics[width=0.24\textwidth]{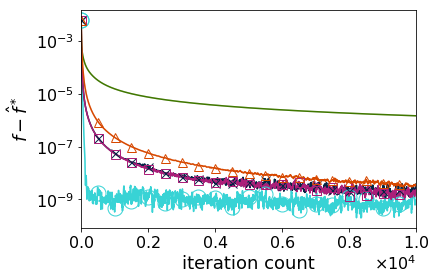}\label{fig:r_es_lin_l1_1e-5_sr}}
 \subfigure[$\sigma_{\eta}=10^{-3}$]{\includegraphics[width=0.24\textwidth]{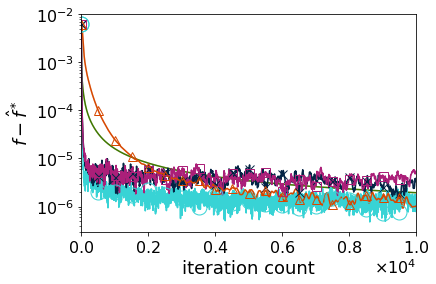}\label{fig:r_es_lin_l1_1e-3_sr}}
 \subfigure[$\sigma_{\eta}=10^{-1}$]{\includegraphics[width=0.24\textwidth]{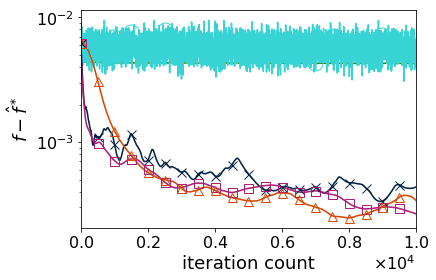}\label{fig:r_es_lin_l1_1e-1_sr}}
\subfigure[$\sigma_{\eta}=0$]{\includegraphics[width=0.24\textwidth]{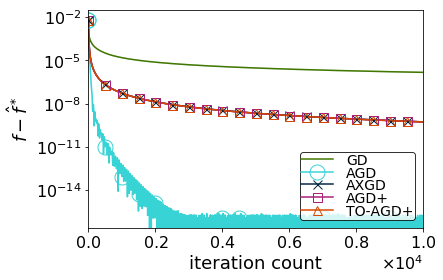}\label{fig:r_es_lin_l1_0}}
 \subfigure[$\sigma_{\eta}=10^{-5}$]{\includegraphics[width=0.24\textwidth]{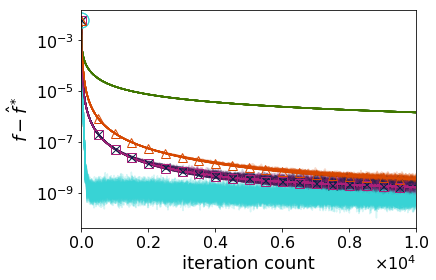}\label{fig:r_es_lin_l1_1e-5}}
 \subfigure[$\sigma_{\eta}=10^{-3}$]{\includegraphics[width=0.24\textwidth]{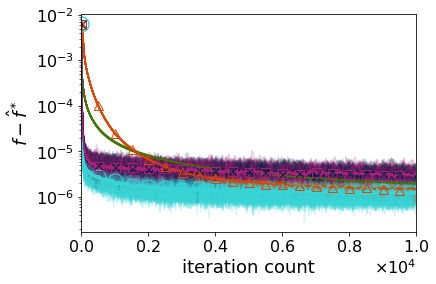}\label{fig:r_es_lin_l1_1e-3}}
 \subfigure[$\sigma_{\eta}=10^{-1}$]{\includegraphics[width=0.24\textwidth]{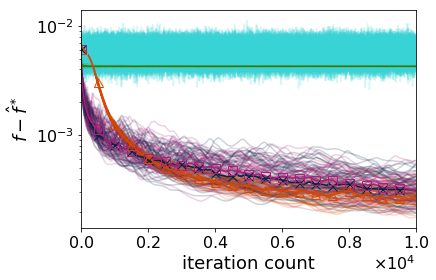}\label{fig:r_es_lin_l1_1e-1}}
\caption{Performance of \gd~and accelerated algorithms (\agd, \axgd, \agdp, \textsc{to-}\agdp)  for linear regression over $\ell_1$-ball (LASSO) on Epileptic Seizure Recognition Dataset~\cite{andrzejak2001indications} \protect\subref{fig:es_lin_l1_0}-\protect\subref{fig:es_lin_l1_1e-1} without noise reduction;  \protect\subref{fig:r_es_lin_l1_0}-\protect\subref{fig:r_es_lin_l1_1e-1} sample run with \textsc{to}\agdp, \rsd~ and \rsd-2; and \protect\subref{fig:r_es_lin_l1_0}-\protect\subref{fig:r_es_lin_l1_1e-1} 50 repeated runs and the median with \textsc{to}\agdp, \rsd~ and \rsd-2.}
\label{fig:test2}
\end{figure*}

For the second set of experiments, we used the Epileptic Seizure Recognition Dataset~\cite{andrzejak2001indications} obtained from~\cite{Lichman:2013}. The dataset consists of brain activity EEG recordings for 100  patients at different time points and in five different states, of which only one indicates epileptic seizure. The dataset contains 11500 rows and 179 columns, of which the first 178 columns are features while the last column indicates whether the patient was in seizure. Before running the experiments, we standardize the data using a standard preprocessing function from the Python Sci-kit library.

\paragraph{Linear regression and LASSO.}
We performed linear regression on the considered dataset mainly to illustrate the performance of the algorithms in the bounded regime (for $\ell_1$-constraints -- LASSO), which we discuss here. The results for standard, unconstrained linear regression are similar to the results for logistic regression discussed below and are  omitted. 

Fig.~\ref{fig:es_lin_l1_0}-\ref{fig:es_lin_l1_1e-1} shows the performance of \gd~and accelerated algorithms for $\ell_1$-constrained linear regression (LASSO) on the Epileptic Seizure Recognition Dataset. Interestingly, the experiments suggest that there are cases when \agd~performes better than \axgd~and \agdp. In particular, in the noiseless (Fig.~\ref{fig:es_lin_l1_0}) and the low-noise (Fig.~\ref{fig:es_lin_l1_1e-5}) settings, \agd~performs much better than worst-case and converges faster than \axgd~and \agdp. 

However, the faster convergence comes at the expense of lower stability to noise as the noise becomes higher. Specifically, as the noise is increased, \agd~performs only marginally better and with higher variance than \axgd~and \agdp~(Fig.~\ref{fig:es_lin_l1_1e-3}), and stabilizes to much higher mean and variance in the very high-noise setting (Fig.~\ref{fig:cyc_uc_1e-1}). 

Intuitively, ``greedy'' gradient steps that \agd~takes may reduce the function value significantly and lead to faster convergence in the noiseless and low-noise settings, while making the convergence very sensitive to the noise from the last iteration, as the gradient steps only depend on the last seen (noisy) gradient. In contrast, \axgd~and \agdp~are more stable to noise, since both of their per-iteration steps depend on the aggregate gradient (and thus, aggregate noise) information.

As expected from the analytical results from Section~\ref{sec:algorithm}, restart and slow-down does not noticeably improve the mean error of the algorithms (Fig.~\ref{fig:r_es_lin_l1_0_sr}-\ref{fig:r_es_lin_l1_1e-1_sr}, \ref{fig:r_es_lin_l1_0}-\ref{fig:r_es_lin_l1_1e-1}). However, in agreement with the analysis, it can reduce the error variance in the high-noise-variance setting (Fig.~\ref{fig:r_es_lin_l1_1e-1}). We also note that \textsc{to-}\agdp~is more stable over the repeated methods' execution, at the expense of slower initial convergence. 

\begin{figure*}[ht!]
\centering
 \subfigure[$\sigma_{\eta}=0$]{\includegraphics[width=0.24\textwidth]{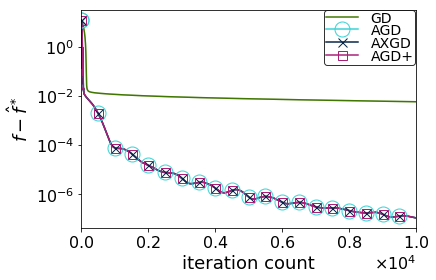}\label{fig:es_log_uc_0}}
 \subfigure[$\sigma_{\eta}=10^{-5}$]{\includegraphics[width=0.24\textwidth]{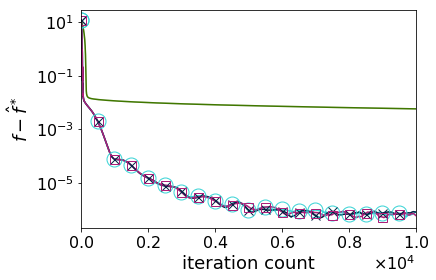}\label{fig:es_log_uc_1e-5}}
 \subfigure[$\sigma_{\eta}=10^{-3}$]{\includegraphics[width=0.24\textwidth]{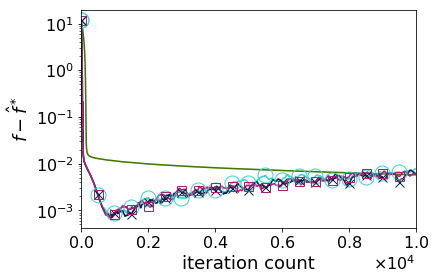}\label{fig:es_log_uc_1e-3}}
 \subfigure[$\sigma_{\eta}=10^{-1}$]{\includegraphics[width=0.24\textwidth]{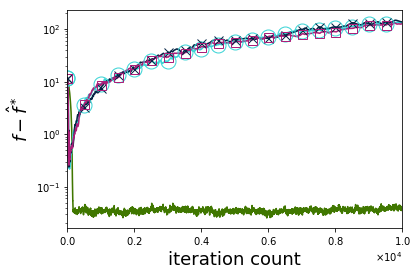}\label{fig:es_log_uc_1e-1}}
\subfigure[$\sigma_{\eta}=0$]{\includegraphics[width=0.24\textwidth]{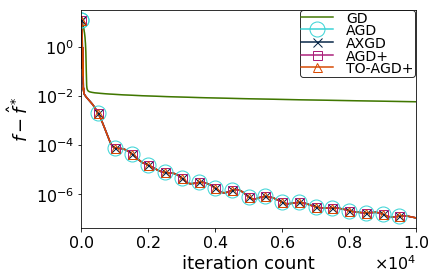}\label{fig:r_es_log_uc_0_sr}}
 \subfigure[$\sigma_{\eta}=10^{-5}$]{\includegraphics[width=0.24\textwidth]{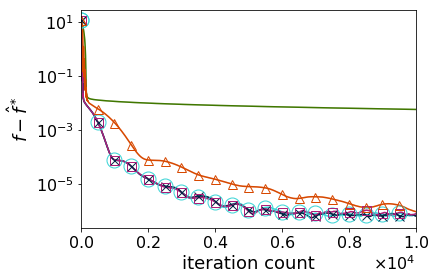}\label{fig:r_es_log_uc_1e-5_sr}}
 \subfigure[$\sigma_{\eta}=10^{-3}$]{\includegraphics[width=0.24\textwidth]{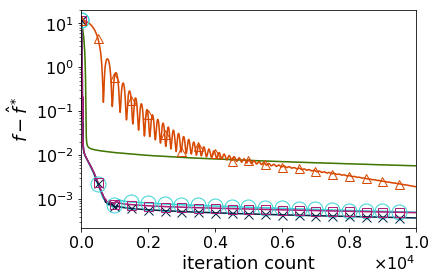}\label{fig:r_es_log_uc_1e-3_sr}}
 \subfigure[$\sigma_{\eta}=10^{-1}$]{\includegraphics[width=0.24\textwidth]{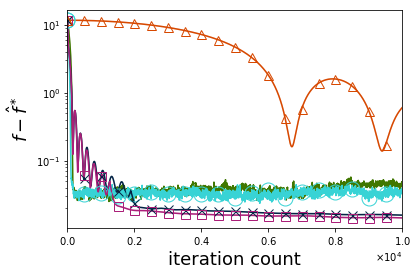}\label{fig:r_es_log_uc_1e-1_sr}}
\subfigure[$\sigma_{\eta}=0$]{\includegraphics[width=0.24\textwidth]{r_es_log_uc_0_sr.png}\label{fig:r_es_log_uc_0}}
 \subfigure[$\sigma_{\eta}=10^{-5}$]{\includegraphics[width=0.24\textwidth]{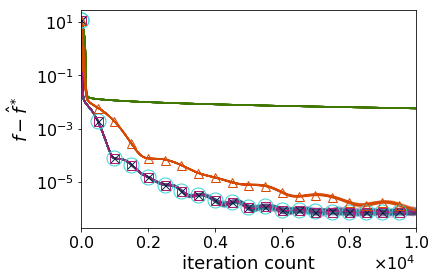}\label{fig:r_es_log_uc_1e-5}}
 \subfigure[$\sigma_{\eta}=10^{-3}$]{\includegraphics[width=0.24\textwidth]{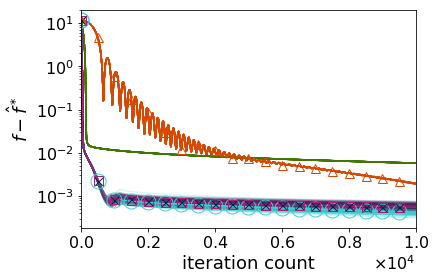}\label{fig:r_es_log_uc_1e-3}}
 \subfigure[$\sigma_{\eta}=10^{-1}$]{\includegraphics[width=0.24\textwidth]{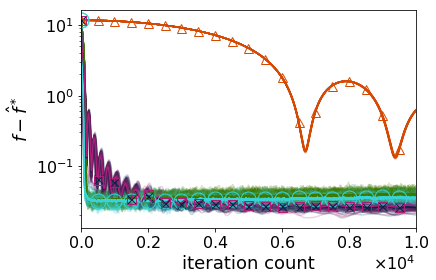}\label{fig:r_es_log_uc_1e-1}}
\caption{Performance of gradient descent (\gd) and accelerated algorithms (\agd, \axgd, \agdp, \textsc{to-}\agdp) for logistic regression on Epileptic Seizure Recognition Dataset~\cite{andrzejak2001indications}  \protect\subref{fig:es_log_uc_0}-\protect\subref{fig:es_log_uc_1e-1} without noise reduction;  \protect\subref{fig:r_es_log_uc_0_sr}-\protect\subref{fig:r_es_log_uc_1e-1_sr} sample run with noise reduction; and \protect\subref{fig:r_es_log_uc_0}-\protect\subref{fig:r_es_log_uc_1e-1} 50 repeated runs and the median run with noise reduction.}
\label{fig:log-reg}
\end{figure*}
\paragraph{Logistic regression.}
Finally, we evaluated the performance of the accelerated algorithms and \gd~for (unregularized) logistic regression on the Epileptic Seizure Recognition Dataset. The results are shown in Fig.~\ref{fig:log-reg}.

Similar as in the case of unconstrained minimization from the beginning of this section, in the noiseless and low-noise settings (Fig.~\ref{fig:es_log_uc_0}, \ref{fig:es_log_uc_1e-5}) all accelerated algorithms perform similarly and restart and slow-down does not lead to any noticeable improvements or degradation (Fig.~\ref{fig:r_es_log_uc_0_sr}, \ref{fig:r_es_log_uc_0}, \ref{fig:r_es_log_uc_1e-5_sr}, \ref{fig:r_es_log_uc_1e-5}). Once the noise is high enough (Fig.~\ref{fig:es_log_uc_1e-3}, \ref{fig:es_log_uc_1e-1}), all accelerated algorithms begin to accumulate noise, while restart and slow-down stabilize their performance to a low error mean and variance. Interestingly, in all the experiments, when \rsd~and \rsd-2~are employed all accelerated algorithms perform at least as good as \gd~in terms of the error mean and variance. 


\section{Conclusion}\label{sec:conclusion}

This paper presents a new accelerated algorithm together with the analysis of its associated error bounds in the cases when the gradient oracle is corrupted by additive noise. Moreover, motivated by the analytical results, we also provide simple semi-heuristics that restart and slow down the accelerated algorithms to reduce their error mean and variance. Our numerical experiments corroborate the analytical results. 

There are several interesting directions for future work that merit further investigation. For example, restart \& slow-down approaches that do not require the explicit knowledge of the noise variance would be interesting for applications in engineered systems where gradients are estimated from noise-corrupted measurements.

\section*{Acknowledgements}
Part of this work was done while the authors were visiting the Simons Institute for the Theory of Computing. It was partially supported by NSF grant \#CCF-1718342 and by the DIMACS/Simons Collaboration on Bridging Continuous and Discrete Optimization through NSF grant \#CCF-1740425. 
JD and LO would like to thank Guanghui Lan, Pavel Dvurechensky and the anonymous reviewers for useful comments.

The algorithm \agdp~for the noiseless case is due to Michael B. Cohen, who termed it a ``proper extension of Nesterov's method'' and shared it with JD during the Fall 2017 semester at the Simons Institute for the Theory of Computing. JD and LO dedicate this paper to the memory of Michael's brilliance and scholarship.
\bibliography{references}
\bibliographystyle{icml2018}

\end{document}